\documentclass[preprint,review,12pt]{elsarticle}
\usepackage{amsmath,amssymb,amsfonts,amsthm}
\usepackage{latexsym,bm}
\usepackage{graphicx}
\usepackage{float}
\usepackage{stmaryrd}

\usepackage{background}
\SetBgContents{}

\usepackage[CJKbookmarks,bookmarks=true,bookmarksnumbered=true]{hyperref}
\hypersetup{pdfsubject={StablizedDGTheory12}, pdfauthor={GZH-CJW of HENU}}

\usepackage{natbib}
\biboptions{numbers,sort&compress}

\journal{ NA}

\begin{document}

\begin{frontmatter}

\title{A new variational formulation based on discontinuous Galerkin technique for a reaction-diffusion problem\tnoteref{1}}\tnotetext[1]{The
work is supported by the Natural Science Foundation of China(No. 10901047).\\
 Email: zhihaoge@henu.edu.cn, caojiwei666@sina.com, tel:+86-13663786282, fax:+86-378-3881696.}
\author{Zhihao G${\rm e^{1,2}}$,\ Jiwei Ca${\rm o^1}$}
\address{$ ^1$School of Mathematics and Information Sciences,
 Henan University, Kaifeng 475004, P.R. China\\
$ ^2$Institute of Applied Mathematics,
 Henan University, Kaifeng 475004, P.R. China}

\begin{abstract}
In this paper, a new variational formulation based on discontinuous Galerkin technique for a reaction-diffusion problem is introduced, and the discontinuous Galerkin technique of this work is different from the general discontinuous Galerkin methods. The well posedness of the new formulation is given. Finally, it is pointed that the new variational formulation will be helpful to design better hybrid numerical methods which  will not only strongly stable in spatial variable and absolutely stable in temporal variable but also be optimally convergent.
\end{abstract}

\begin{keyword}
Discontinuous Galerkin technique, variational formulation, inf-sup condition.
\end{keyword}

\end{frontmatter}

\newtheorem{thm}{Theorem}[section]
\newtheorem{lem}{Lemma}[section]
\newtheorem{cor}{Corollary}[section]
\newtheorem{prop}{Proposition}[section]
\newtheorem{remark}{Remark}[section]
\renewcommand{\theequation}{\thesection.\arabic{equation}}
\numberwithin{equation}{section}

\section{Introduction}
\setcounter{equation}{0}
In this work, we propose a new variational formulation based on discontinuous Galerkin technique for a reaction-diffusion problem within a new function space setting, the reaction-diffusion problem is
\begin{eqnarray}
- \nabla \cdot (K(x)\nabla u) + u &=& f,\ \textrm{in}\ \Omega,\label{eq3}\\
             u &=& 0,\ \textrm{on}\ \partial \Omega,\label{eq303}
\end{eqnarray}
where $f$ is a real-valued function in $L^2(\Omega)$ and
$0 < K_0 \leq K(x) \leq K_1$.

The problem (\ref{eq3})-(\ref{eq303}) is an important and basic mathematical model, widely used in many fields. As for the theoretical result of the above model, one can see \cite{Gilbarg-Trudinger} and so on.

The first discontinuous Galerkin (DG) method for hyperbolic equations was introduced by \cite{[7]W.H.Reed},
and since that time there has been an active development of DG methods for hyperbolic and nearly hyperbolic problems,
resulting in a variety of different methods. For elliptic and parabolic
equations, discontinuous finite elements were proposed by many researchers, such as \cite{[3]V.Dolejsi, [4]D.N.ArnoldandF.Brezzl, [5]B.Riviere, [12]D.N.ArnoldandB.Cockburn,[13]I.BabuskaandC.Baumann,[28]B.RiviereandM.F.Wheeler, [10]J.Douglas,[8]I.Babuska,[9]I.BabuskaandM.Zlamal,[19]F.BassiandS.Rebay,[20]B.CockburnandC.W.Shu,[21]J.OdenandC.E.Baumann,[14]F.BrezziandG.Manzini,[2]A.Romkes}.

The key idea of the paper is to propose a new variational formulation based on discontinuous Galerkin technique, and the discontinuous Galerkin technique is different from the general discontinuous Galerkin methods.
The formulation satisfies a local conservation property, and we prove well posedness of the new formulation by proving and using inf-sup condition.

The paper is organized as follows. In Section 2, we introduce the new weak formulation of the problem (\ref{eq3})-(\ref{eq303}). In Section 3, we investigate the well posedness of
the variational formulation, which includes the continuity property of the bilinear form and the inf-sup condition.
Finally, some concluding remarks are summarized.

\section{A new variational formulation }
Let $\Omega \subset \mathbb{R}^2$ be a bounded open domain with Lipschitz boundary
$\partial \Omega$ and let $\{P_h\}$ be a family of regular partitions of $\Omega$
into open elements $E$ such that $\Omega = \textrm{int} ( \bigcup\limits_{E \in P_h} \bar{E} )$.

The following notations will be used in our further considerations. Denote $h=\max_{E \in P_h}h_E$, where
$h_E=\textrm{diam}(E)$. The set of all edges of the
partition $P_h$ is given by $\varepsilon_h = \{\gamma_k\}, k=1,\ldots,N_{edge}$,
where $N_{edge}$ reprents the number of edges in the partition $P_h$. The interior
interface $\Gamma_{\textrm{int}}$ is then defined as the union of all common edges shared
by elements of partition $P_h$, that is,
\begin{eqnarray}
\Gamma_{\textrm{int}} = \bigcup\limits_{k=1}^{N_{edge}}\gamma_e \backslash \partial \Omega.\nonumber
\end{eqnarray}


For the sake of clarity in the notation, the jump and average operators 
are defined by
{\newcommand\langlen{\ensuremath{\langle}}
\begin{eqnarray}
[v]=v|_{\gamma_e \subset \partial E_i}-v|_{\gamma_e \subset \partial E_j},
\langle v\rangle =\frac12(v|_{\gamma_e \subset \partial E_i}
+ v|_{\gamma_e \subset \partial E_j}),\ i>j,\label{120409-1}
\end{eqnarray}
where $\gamma_e=\textrm{int}(\partial E_i \cap \partial E_j)$ is
the common edge in 2D (or interface in 3D) between two neighbouring elements, see Figure \ref{figdoc666}.
\begin{figure}[H]
\centering
\includegraphics[width=6.5cm]{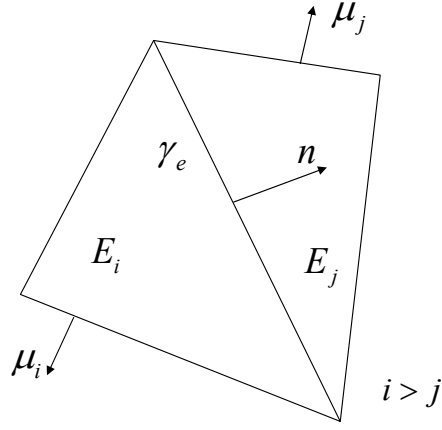}
\caption{Geometrical definitions of neighbouring elements.}\label{figdoc666}
\end{figure}

First, we introduce the following broken Sobolev space:
\begin{eqnarray}
\mathcal{M}(P_h)=\{v \in L^2(\Omega)|v\shortmid_E \in H(\Delta,E),\forall E \in P_h,
[\nabla v \cdot {\bm n}] \in L^2(\Gamma_{\textrm{int}})\},\nonumber
\end{eqnarray}
where
\begin{eqnarray}
H(\Delta,E)=\{v \in L^2(E)|\nabla \cdot \nabla v \in L^2(E)\} \subset H^1(E).\nonumber
\end{eqnarray}
Notice here, that $v \in H(\Delta,E)$ implies $\nabla v \cdot {\bm \mu}
\in H^{-1/2}(\partial \Omega)$.
The norm $||| \cdot |||$ on $\mathcal{M}(P_h)$ is defined as
{\setlength\arraycolsep{2pt}
\begin{eqnarray}\label{eq1}
|||v|||^2 &=& \sum_{E \in P_h}\Big\{\|v\|^2_{\ast}+\frac{h^\nu}{p^\theta}
\|K(x)\nabla v \cdot {\bm \mu}\|^2_{H^{-1/2}(\partial E)}\Big\} {} \nonumber\\
&& {} +\sigma\frac{h^\lambda}{p^\zeta}\|
[K(x)\nabla v \cdot {\bm \mu}]\|^2_{L^{2}(\Gamma_{\textrm{int}})} {}.
\end{eqnarray}}
where we denote that $\|v\|^2_{\ast}= \int_E |K(x)| |\nabla v|^2 dx +
\int_E |v|^2dx=\|K^{\frac{1}{2}} \nabla v\|^2_{L^2(E)}+\|v\|^2_{L^2(E)}$,
and one can easily prove that norms $\|\cdot\|_{\ast}$ and $\|\cdot\|_{H^1(E)}$
are equivalent. The parameter $p \in \mathbb{R}$ that is introduced here
represents the minimum of all of the local orders of polynomial approximations
$p_E$ in the partition $P_h$. The parameters $\nu,\lambda,\theta,\zeta$
are greater than or equal to zero and that the subsequent norms in (\ref{eq1})
are defined as
\begin{eqnarray}
\|u\|_{H^{-1/2}(\partial E)}&=&\sup_{\varphi \in H^{1/2}(\partial E)}
\frac{|{\langle u,\varphi \rangle}_{-1/2 \times 1/2,\partial E}|}
{\|\varphi\|_{H^{1/2}(\partial E)}},\label{eq8}\\
\|\varphi\|_{H^{1/2}(\partial E)}
&=&\inf_{\substack{w \in H^{1}(E) \\ \gamma_0 w =\varphi}} \|w\|_{\ast},\label{eq9}
\end{eqnarray}
where ${\langle \cdot,\cdot \rangle}_{-1/2 \times 1/2,\partial E}$ denotes the duality
pairing in $H^{-1/2}(\partial E) \times H^{1/2}(\partial E)$, namely,
\begin{eqnarray}
\langle u, v \rangle_{-1/2 \times 1/2,\partial E}
=\int_{\partial E}uv ds.
\end{eqnarray}
And  $\gamma_0$
denotes the trace operator
$$\gamma_0:H^1(E) \rightarrow H^{1/2}(\partial E).$$
Now, the choice for the space of test functions, $V$, is the
completion of $\mathcal{M}(P_h)$ with respect to the norm $|||\cdot|||$.

The new discontinuous variational formulation, within this
new function space setting, is then stated as follows:
\begin{eqnarray}\label{eq2}
\textrm{Find}\ u \in V, s.t., B(u,v)=L(v),\ \forall v \in V,
\end{eqnarray}
where the bilinear form $B(u,v)$ and linear form $L(v)$ are defined as
\begin{eqnarray}
B(u,v) &=& \sum_{E \in P_h}\Big\{\int_{E}\big(K(x)\nabla u \cdot \nabla v + uv\big)dx {}\nonumber\\
&& {} - \int_{\partial E}\big(v(K(x)\nabla u \cdot {\bm \mu})
      -(K(x)\nabla v \cdot {\bm \mu})u\big)ds\Big\} {} \nonumber\\
&& {} + \int_{\Gamma_{\textrm{int}}}\big(\langle v\rangle[K(x)\nabla u \cdot {\bm n}]
      -\langle u\rangle[K(x)\nabla v \cdot {\bm n}]\big)ds {} \nonumber\\
&& {} + \int_{\Gamma_{\textrm{int}}} \sigma \frac{h^{\lambda}}{p^{\zeta}}
[K(x)\nabla u \cdot {\bm n}][K(x)\nabla v \cdot {\bm n}]ds, {} \label{eq5} \\
L(v) &=& \int_{\Omega}fvdx.\label{eq18}
\end{eqnarray}

The formulation (\ref{eq2}) (or VBVP (\ref{eq2})) is closely related to the DG method formulation by Oden, Babuska
and Baumann \cite{[1]J.T.Oden}. In fact, choosing the subspace $\tilde{V}(P_h)$
of $V$ of function with fluxes $\nabla v \cdot {\bm n} \in L^2(\partial E)$,
and using the following identities:
\begin{eqnarray}\label{eq6}
&&\quad \sum_{E \in P_h}\int_{\partial E} v(K(x)\nabla u \cdot {\bm \mu})ds\nonumber\\
&&=\int_{\Gamma_{\textrm{int}}}[v(K(x)\nabla u \cdot {\bm n})]ds
+\int_{\partial \Omega}v(K(x)\nabla u \cdot {\bm n})ds,
\end{eqnarray}
and
\begin{eqnarray}\label{eq7}
[v(K(x)\nabla u \cdot {\bm n})]=\langle K(x)\nabla u \cdot {\bm n} \rangle[v]
+\langle v \rangle [K(x)\nabla u \cdot {\bm n}],
\end{eqnarray}
we can get the DG formulation of \cite{[1]J.T.Oden}.
The only difference would then be the addition of the last term in (\ref{eq5}).
This term has been incorporated in \cite{[16]P.PercellandM.F.Wheeler,[22]T.J.R.HughesandG.Engel},
where it is accompanied by the jumps of the
function $[v]$ across the element interfaces. We replace the $[v]$ jumps by the
$[\nabla v\cdot {\bm \mu}]$ jumps, in order to prove both continuity and Inf-Sup
properties of the bilinear form with respect to the space $V$, in which
the norm is defined as $|||\cdot|||$.


\section{Well posedness of the new variational formulation}
In this section, we establish the well posedness of the variational formulation
(\ref{eq2}). Thus, we show that the solution of the problem (\ref{eq3})-(\ref{eq303})
is also a solution to the weak problem. And we prove the existence, uniqueness
and the continuous dependence on the input data of the solution to the variational formulation
(\ref{eq2}). Essential in some of these proofs are the continuity
inf-sup conditions of the bilinear form  (\ref{eq5}).

Now, we introduce an important lemma \cite{[2]A.Romkes} as follows.
\begin{lem}\label{lem6}
If $u \in H(\Delta,\Omega)$, then $u$ and $(\nabla u \cdot {\bm n})$ are weakly
continuous across the element interface $\Gamma_{{\rm int}}$ in the sense that
\begin{eqnarray}
\int_{\gamma_e}[u]\varphi ds=0,\qquad \int_{\gamma_e}
[\nabla u \cdot {\bm n}]\varphi ds=0,
\end{eqnarray}
where $\varphi$ belonges to $D(\gamma_e)$, and $\gamma_e= {\rm int}(\partial E_i \in\partial E_j)
\subset \Gamma_{{\rm int}}$.
\end{lem}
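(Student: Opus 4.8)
The plan is to exploit the \emph{global} hypothesis $u\in H(\Delta,\Omega)$, which means that $\Delta u\in L^2(\Omega)$ holds in the sense of distributions on all of $\Omega$, and to turn this single scalar condition into the two interface identities by a piecewise Green's formula followed by a decoupling argument. First I would record that the global distributional Laplacian coincides a.e.\ with the element-wise one: testing the defining relation $\int_\Omega u\,\Delta\phi\,dx=\int_\Omega(\Delta u)\phi\,dx$ against $\phi\in C_0^\infty(E)$ supported inside a single element shows that $(\Delta u)|_E$ agrees with $\Delta(u|_E)\in L^2(E)$, and since the interfaces carry no area this identification holds on every $E\in P_h$. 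This step uses only $u|_E\in H(\Delta,E)$ and the global $L^2$-Laplacian.

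Next I would apply, on each $E$, the generalized Green formula that defines the normal trace $\nabla u\cdot{\bm\mu}\in H^{-1/2}(\partial E)$ for $u\in H(\Delta,E)$, together with the ordinary Green identity for the smooth factor $\phi$ (legitimate since $u|_E\in H^1(E)$). For $\phi\in C_0^\infty(\Omega)$ this produces, on each element,
\[
\int_E u\,\Delta\phi\,dx=\int_E(\Delta u)\phi\,dx+\int_{\partial E}u\,(\nabla\phi\cdot{\bm\mu})\,ds-\langle\nabla u\cdot{\bm\mu},\phi\rangle_{-1/2\times 1/2,\partial E}.
\]
Summing over $E\in P_h$ and invoking the previous step, the volume integrals cancel, the contributions on $\partial\Omega$ vanish because $\phi$ has compact support, and on every interior edge the two one-sided terms recombine (using that $\phi$ and $\nabla\phi$ are continuous across $\gamma_e$, and that the outward normals satisfy ${\bm\mu}_i=-{\bm\mu}_j={\bm n}$) into jump quantities, leaving the master identity
\[
\sum_{\gamma_e\subset\Gamma_{\rm int}}\Big\{\int_{\gamma_e}[u]\,(\nabla\phi\cdot{\bm n})\,ds-\langle[\nabla u\cdot{\bm n}],\phi\rangle_{\gamma_e}\Big\}=0,\qquad\forall\,\phi\in C_0^\infty(\Omega).
\]

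Finally I would decouple the two conclusions by prescribing independently the trace and the normal derivative of $\phi$ on a single edge. Fixing $\gamma_e$ and a tubular neighbourhood that meets only $E_i$, $E_j$ and $\gamma_e$, I use local coordinates $(s,t)$ with $t$ the signed normal distance (oriented along ${\bm n}$) and a cutoff $\eta$ with $\eta(0)=1,\ \eta'(0)=0$. The choice $\phi(s,t)=\varphi(s)\eta(t)$ gives $\phi|_{\gamma_e}=\varphi$ and $\nabla\phi\cdot{\bm n}|_{\gamma_e}=0$, so the master identity collapses to $\int_{\gamma_e}[\nabla u\cdot{\bm n}]\varphi\,ds=0$; the choice $\phi(s,t)=t\,\varphi(s)\eta(t)$ gives $\phi|_{\gamma_e}=0$ and $\nabla\phi\cdot{\bm n}|_{\gamma_e}=\varphi$, yielding $\int_{\gamma_e}[u]\varphi\,ds=0$. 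Because the neighbourhood isolates one edge, only that edge survives in each sum, and $\varphi\in D(\gamma_e)$ was arbitrary, which gives both assertions.

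I expect the main obstacle to be the rigorous bookkeeping of the boundary terms: justifying the generalized Green formula and the $H^{-1/2}(\partial E)\times H^{1/2}(\partial E)$ pairing for a merely $H(\Delta,E)$ function (so that $\int_{\gamma_e}[\nabla u\cdot{\bm n}]\varphi\,ds$ must be read as a duality pairing rather than a genuine Lebesgue integral), and checking that the one-sided contributions recombine with the correct orientation of ${\bm n}$. The essential trick is the decoupling in the last step, which separates the flux condition from the trace condition by exploiting that the trace and the normal derivative of the smooth test function can be prescribed independently on $\gamma_e$. As a cross-check one may note that the global hypothesis also yields interior regularity $u\in H^2_{\rm loc}(\Omega)$, so $u$ and $\nabla u$ are in fact continuous across the interior edge $\gamma_e$, a stronger fact consistent with the weak continuity proved here.
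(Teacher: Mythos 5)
Your argument is correct, and it is worth noting at the outset that the paper itself offers no proof of this lemma: it is imported verbatim from the cited reference of Romkes, Prudhomme and Oden, so there is no in-paper proof to compare against. Your route --- element-wise Green's identity, summation to a master identity over $\Gamma_{\rm int}$, and then decoupling of the two interface conditions by test functions of the form $\varphi(s)\eta(t)$ and $t\varphi(s)\eta(t)$ with independently prescribed trace and normal derivative on $\gamma_e$ --- is a complete and standard way to establish the result, and the decoupling step is exactly the right trick. The orientation bookkeeping you flag does work out: with ${\bm\mu}_i=-{\bm\mu}_j={\bm n}$ the one-sided terms recombine into $\int_{\gamma_e}[u](\nabla\phi\cdot{\bm n})\,ds-\langle[\nabla u\cdot{\bm n}],\phi\rangle_{\gamma_e}$, and the volume terms cancel precisely because the global and piecewise Laplacians agree, which is the only place the hypothesis $u\in H(\Delta,\Omega)$ (as opposed to mere broken regularity) enters. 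For comparison, the argument in the cited source is more modular but amounts to the same thing: since the paper's $H(\Delta,\Omega)$ is taken to sit inside $H^1(\Omega)$, the identity $[u]=0$ follows at once from uniqueness of the $H^1$ trace, while $\nabla u\in H({\rm div},\Omega)$ (because $\nabla u\in L^2$ and $\nabla\cdot\nabla u=\Delta u\in L^2$) gives weak continuity of the normal component by the standard characterization of $H({\rm div})$; your single master identity packages both facts at once at the cost of the explicit test-function construction. Two small points of care, both of which you already anticipate: the edge term $\int_{\gamma_e}[\nabla u\cdot{\bm n}]\varphi\,ds$ must be read as an $H^{-1/2}\times H^{1/2}$ duality pairing rather than a Lebesgue integral unless one also invokes the extra assumption $[\nabla u\cdot{\bm n}]\in L^2(\Gamma_{\rm int})$ built into $\mathcal{M}(P_h)$; and the tubular neighbourhood must be chosen small enough to avoid the endpoints of $\gamma_e$ and all other elements, which is possible because $\varphi\in D(\gamma_e)$ has compact support in the open edge.
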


\begin{thm}\label{thm1}
Let $u$ be the solution of the problem (\ref{eq3})-(\ref{eq303}). Then $u$ is a solution to the variational formulation
(\ref{eq2}) as well.
\end{thm}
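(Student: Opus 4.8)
The plan is to proceed exactly as in a classical Galerkin consistency argument, but elementwise, and then to absorb all the interface contributions of $B(u,v)$ using the regularity of the exact solution. First I would take the strong equation (\ref{eq3}), multiply by an arbitrary $v \in V$, integrate over a single element $E \in P_h$, and apply Green's first identity on $E$. This produces, on each element,
\[
\int_E \big(K(x)\nabla u \cdot \nabla v + uv\big)\,dx - \int_{\partial E} v\,(K(x)\nabla u \cdot \bm{\mu})\,ds = \int_E fv\,dx .
\]
Summing over all $E \in P_h$ and using (\ref{eq18}), the right-hand side collapses to $L(v)$. Comparing the left-hand side with the definition (\ref{eq5}) of $B(u,v)$, the volume terms and the $-\int_{\partial E} v(K\nabla u\cdot\bm{\mu})$ boundary terms already coincide, so the theorem reduces to showing that the \emph{remaining} terms of (\ref{eq5}) vanish when $u$ solves (\ref{eq3})--(\ref{eq303}), namely
\[
\sum_{E\in P_h}\int_{\partial E}(K\nabla v\cdot\bm{\mu})u\,ds
+ \int_{\Gamma_{\textrm{int}}}\!\!\big(\langle v\rangle[K\nabla u\cdot\bm{n}] - \langle u\rangle[K\nabla v\cdot\bm{n}]\big)ds
+ \int_{\Gamma_{\textrm{int}}}\!\!\sigma\tfrac{h^\lambda}{p^\zeta}[K\nabla u\cdot\bm{n}][K\nabla v\cdot\bm{n}]\,ds = 0 .
\]

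Next I would process the first sum using the DG flux-splitting identities, now reading (\ref{eq6}) with the roles reversed: the scalar is $u$ and the flux is $K\nabla v\cdot\bm{\mu}$. The exterior boundary contribution $\int_{\partial\Omega}u(K\nabla v\cdot\bm{n})\,ds$ vanishes by the homogeneous Dirichlet condition (\ref{eq303}), and applying the decomposition (\ref{eq7}) gives $\sum_E\int_{\partial E}(K\nabla v\cdot\bm{\mu})u\,ds = \int_{\Gamma_{\textrm{int}}}\big(\langle K\nabla v\cdot\bm{n}\rangle[u]+\langle u\rangle[K\nabla v\cdot\bm{n}]\big)ds$. The $\langle u\rangle[K\nabla v\cdot\bm{n}]$ pieces then cancel against the matching interface term above, and we are left to verify that
\[
\int_{\Gamma_{\textrm{int}}}\langle K\nabla v\cdot\bm{n}\rangle[u]\,ds
+ \int_{\Gamma_{\textrm{int}}}\langle v\rangle[K\nabla u\cdot\bm{n}]\,ds
+ \int_{\Gamma_{\textrm{int}}}\sigma\tfrac{h^\lambda}{p^\zeta}[K\nabla u\cdot\bm{n}][K\nabla v\cdot\bm{n}]\,ds = 0 .
\]

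Finally I would invoke the regularity of the exact solution to kill these three integrals. Since $u \in H^1_0(\Omega)$, its trace is single-valued and $[u]=0$ on $\Gamma_{\textrm{int}}$, annihilating the first integral; since $-\nabla\cdot(K\nabla u) = f-u \in L^2(\Omega)$, the flux $K\nabla u$ belongs to $H(\mathrm{div},\Omega)$, so its normal component is continuous across interfaces and $[K\nabla u\cdot\bm{n}]=0$, annihilating the second and third. Lemma \ref{lem6} is precisely the weak form of these two continuity statements. I expect the main obstacle to be the rigorous justification of this last step: the multipliers $\langle K\nabla v\cdot\bm{n}\rangle$ and $\langle v\rangle$ are not smooth test functions but live only in the trace/dual spaces built into $|||\cdot|||$, whereas Lemma \ref{lem6} is stated for $\varphi \in D(\gamma_e)$. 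One must therefore check that $[u]=0$ and $[K\nabla u\cdot\bm{n}]=0$ hold in the appropriate $H^{1/2}$ and $H^{-1/2}(\Gamma_{\textrm{int}})$ dualities and extend the vanishing from $D(\gamma_e)$ to arbitrary $v\in V$ by density, which is exactly where the definition of $V$ as the completion of $\mathcal{M}(P_h)$ under $|||\cdot|||$ is essential.
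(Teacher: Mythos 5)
Your proposal is correct and follows essentially the same route as the paper: elementwise Green's identity, the flux-splitting identities (\ref{eq6})--(\ref{eq7}), the homogeneous Dirichlet condition, the weak interface continuity of $u$ and $K\nabla u\cdot\bm{n}$ from Lemma \ref{lem6}, and a final density argument; the only difference is organizational, in that the paper adds the (vanishing) interface terms and recombines them into $B(u,\varphi)$ while you subtract and show the residual is zero. You are in fact more explicit than the paper about the one genuinely delicate point, namely extending the vanishing of the jump terms from $D(\gamma_e)$ multipliers to the trace/dual pairings required for arbitrary $v\in V$.
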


\begin{proof}
If we restrict (\ref{eq3}) to an element $E \in P_h$, multiply this local
equation by a test function $\varphi_E \in H^2(E)$, integrate over the element $E$, and
apply Green's identity, we get
\begin{eqnarray}
\int_E \big(K(x)\nabla u \cdot \nabla \varphi_E +u \varphi_E\big)dx
-\int_{\partial E}\varphi_E(K(x)\nabla u \cdot {\bm \mu})ds
=\int_E f \varphi_Edx.
\end{eqnarray}
Repeating this for all $E \in P_h$, extending each $\varphi_E$ to zero outside of $E$,
and summing in $E$, yields
\begin{eqnarray}
&&\quad \sum_{E \in P_h}\Big\{\int_E \big(K(x)\nabla u \cdot \nabla \varphi +u \varphi\big)dx
-\int_{\partial E}\varphi(K(x)\nabla u \cdot {\bm \mu})ds\Big\}\nonumber\\
&&=\sum_{E \in P_h}\int_E f \varphi dx.
\end{eqnarray}
where
$$\varphi = \sum_{E \in P_h} \varphi_E \in
\{v \in L^2(\Omega)|\, v|_E \in H^2(E),\forall E \in P_h\}.$$

Since $u$ is the solution of the problem (\ref{eq3})-(\ref{eq303}), we know that $u$
satisfies the Dirichlet boundary condition on $\partial \Omega$. In addition,
it is known that $u$ belongs to $H(\Delta,\Omega)$. From Lemma \ref{lem6},
we know that $u$ and $(\nabla u \cdot {\bm n})$ are weakly continuous
across the element interfaces to the variational formulation in a weak sense,
which yields
\begin{eqnarray}
&&\quad \sum_{E \in P_h}\Big\{\int_E \big(K(x)\nabla u \cdot \nabla \varphi +u \varphi\big)dx
-\int_{\partial E}\varphi(K(x)\nabla u \cdot {\bm \mu})ds\Big\}\nonumber\\
&&\quad +\int_{\Gamma_{\textrm{int}}}{\langle \varphi \rangle}[K(x)\nabla u \cdot {\bm n}]ds
+\int_{\Gamma_{\textrm{int}}}[u]{\langle K(x)\nabla \varphi \cdot {\bm n} \rangle}ds\nonumber\\
&&\quad +\int_{\partial \Omega} u(K(x)\nabla \varphi \cdot {\bm \mu})ds
+\int_{\Gamma_{\textrm{int}}}\sigma \frac{h^\lambda}{p^\zeta}
[K(x)\nabla u \cdot {\bm n}][K(x)\nabla \varphi \cdot {\bm n}]ds\nonumber\\
&&=\int_{\Omega}f \varphi dx,\quad \forall \varphi \in H^2(P_h).
\end{eqnarray}
Combining (\ref{eq6}) and (\ref{eq7}) gives
\begin{eqnarray}
&&\quad\sum_{E \in P_h} \Big\{\int_{E}\big(K(x)\nabla u \cdot \nabla \varphi +u \varphi\big)dx\nonumber\\
&&\quad-\int_{\partial E}\big(\varphi (K(x)\nabla u \cdot {\bm \mu})
-u(K(x)\nabla \varphi \cdot {\bm \mu})\big)ds\Big\}\nonumber\\
&&\quad+\int_{\Gamma_{\textrm {int}}}\big(\langle \varphi \rangle[K(x)\nabla u \cdot {\bm n}]
-\langle u \rangle[K(x)\nabla \varphi \cdot {\bm n}]\big)ds\nonumber\\
&&\quad+\int_{\Gamma_{\textrm {int}}}\sigma \frac{h^\lambda}{p^\zeta}
[K(x)\nabla u \cdot {\bm n}][K(x)\nabla \varphi \cdot {\bm n}]ds,\nonumber\\
&&=\int_{\Omega}f \varphi dx,\quad \forall \varphi \in H^2(P_h).
\end{eqnarray}
Applying the density of $H^2(P_h)$ in $V$, we complete the proof.
\end{proof}

\subsection{Continuity property}
\begin{thm}\label{thm5}
Let $B(\cdot,\cdot)$ be the bilinear form as defined in (\ref{eq5}).
If $\sigma >0$, then there exists $M>0$ such that
\begin{eqnarray}
|B(u,v)|\leq M|||u|||\,|||v|||,\ \forall u,v \in V,
\end{eqnarray}
\end{thm}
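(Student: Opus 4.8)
The plan is to prove the bound first on the dense subspace $\mathcal{M}(P_h)$ (or on $H^2(P_h)$, where all the boundary and interface quantities are classically defined), and then extend it to all of $V$ by continuity, since $|||\cdot|||$ is precisely the completion norm. Writing $B(u,v)=B_1+B_2+B_3+B_4$, where $B_1=\sum_E\int_E(K(x)\nabla u\cdot\nabla v+uv)\,dx$ is the volume term, $B_2=-\sum_E\int_{\partial E}\big(v(K(x)\nabla u\cdot{\bm\mu})-(K(x)\nabla v\cdot{\bm\mu})u\big)\,ds$ the element-boundary flux term, $B_3=\int_{\Gamma_{\textrm{int}}}\big(\langle v\rangle[K(x)\nabla u\cdot{\bm n}]-\langle u\rangle[K(x)\nabla v\cdot{\bm n}]\big)\,ds$ the interface-average term, and $B_4$ the stabilization term, I would bound each $B_i$ by a Cauchy--Schwarz-type inequality whose factors are matched against the squared ingredients of $|||\cdot|||^2$, and then recombine them through a discrete Cauchy--Schwarz inequality over $E\in P_h$.

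For $B_1$, a direct Cauchy--Schwarz estimate (using $K\le K_1$ on the gradient part) gives $|B_1|\le\sum_E\|u\|_{\ast}\|v\|_{\ast}$. For $B_2$, the key step is to read each element-boundary integral as the duality pairing $\int_{\partial E}v(K(x)\nabla u\cdot{\bm\mu})\,ds=\langle K(x)\nabla u\cdot{\bm\mu},\gamma_0 v\rangle_{-1/2\times 1/2,\partial E}$, so that by (\ref{eq8}) it is dominated by $\|K(x)\nabla u\cdot{\bm\mu}\|_{H^{-1/2}(\partial E)}\,\|\gamma_0 v\|_{H^{1/2}(\partial E)}$. Since the $H^{1/2}$-norm (\ref{eq9}) is the infimum of $\|w\|_{\ast}$ over all $H^1$-extensions of the trace and $v|_E$ is itself such an extension, I obtain $\|\gamma_0 v\|_{H^{1/2}(\partial E)}\le\|v\|_{\ast}$; the second half of $B_2$ is handled symmetrically. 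Inserting the weights $h^\nu/p^\theta$, the resulting sum matches the corresponding terms of $|||u|||$ and $|||v|||$ up to the global factor $p^{\theta/2}/h^{\nu/2}$.

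The stabilization term $B_4$ is the cleanest: one Cauchy--Schwarz in $L^2(\Gamma_{\textrm{int}})$ gives $|B_4|\le\big(\sigma h^\lambda/p^\zeta\big)^{1/2}\|[K(x)\nabla u\cdot{\bm n}]\|_{L^2}\cdot\big(\sigma h^\lambda/p^\zeta\big)^{1/2}\|[K(x)\nabla v\cdot{\bm n}]\|_{L^2}\le|||u|||\,|||v|||$, since each factor is exactly the square root of the last term of $|||\cdot|||^2$. The interface-average term $B_3$ is where I expect the main obstacle: after $L^2(\Gamma_{\textrm{int}})$ Cauchy--Schwarz, $\|[K(x)\nabla u\cdot{\bm n}]\|_{L^2}$ is controlled by $|||u|||$ up to $(\sigma h^\lambda/p^\zeta)^{-1/2}$, but $\|\langle v\rangle\|_{L^2(\Gamma_{\textrm{int}})}$ is not itself a norm ingredient. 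Here I would invoke a trace inequality $\|v\|_{L^2(\partial E)}\le C\|v\|_{H^1(E)}$ on each element, together with the stated equivalence $\|\cdot\|_{H^1(E)}\sim\|\cdot\|_{\ast}$, to get $\|\langle v\rangle\|_{L^2(\Gamma_{\textrm{int}})}\le C\big(\sum_E\|v\|_{\ast}^2\big)^{1/2}\le C|||v|||$. Collecting the four estimates and applying the discrete Cauchy--Schwarz inequality over $P_h$ yields $|B(u,v)|\le M|||u|||\,|||v|||$, where $M$ absorbs the trace constant $C$, the bound $K_1$, and the weight factors $p^{\theta/2}/h^{\nu/2}$ and $(\sigma h^\lambda/p^\zeta)^{-1/2}$. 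The delicate points are thus the trace estimate for the averages in $B_3$ and the bookkeeping of these weights, which for a fixed partition are finite and therefore give a valid (in general mesh-dependent) continuity constant $M$.
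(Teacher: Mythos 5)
Your proposal is correct and follows essentially the same route as the paper: term-by-term Cauchy--Schwarz, with the element-boundary flux terms controlled through the duality pairing (\ref{eq8}) and the observation from (\ref{eq9}) that $\|\gamma_0 v\|_{H^{1/2}(\partial E)}\leq\|v\|_{\ast}$, the stabilization term matched directly against the weighted $L^2$ ingredient of $|||\cdot|||$, and all weights absorbed into a mesh-dependent constant $M$. The only cosmetic difference is that the paper first rewrites the interface-average integrals as $\tfrac12\sum_{E}\int_{\partial E\cap\Gamma_{\textrm{int}}}$ before estimating, whereas you keep them on $\Gamma_{\textrm{int}}$ and invoke an elementwise trace inequality for $\|\langle v\rangle\|_{L^2}$ --- two equivalent ways of introducing the same trace-type constant.
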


\begin{proof}
By the definition of the average, we can obtain
\begin{eqnarray}
\int_{\Gamma_{\textrm {int}}} \langle v \rangle[K(x)\nabla u \cdot {\bm n}]ds
=\frac12 \sum_{E \in P_E} \int_{\partial E \cap \Gamma_{\textrm {int}}}
v[K(x)\nabla u \cdot {\bm n}]ds,\ u,v \in V.
\end{eqnarray}
And by the definition of $B(\cdot,\cdot)$, then we get
\begin{eqnarray}
B(u,v) &=& \sum_{E \in P_h}\Big\{\int_{E}\big(K(x)\nabla u \cdot \nabla v + uv\big)dx {}\nonumber\\
&& {} - \int_{\partial E}v(K(x)\nabla u \cdot {\bm \mu})ds
      + \int_{\partial E}u(K(x)\nabla v \cdot {\bm \mu})ds {} \nonumber\\
&& {} + \frac12 \int_{\partial E \cap \Gamma_{\textrm {int}}}
        v[K(x)\nabla u \cdot {\bm n}]ds
      -\frac12 \int_{\partial E \cap \Gamma_{\textrm {int}}}
        u[K(x)\nabla v \cdot {\bm n}]ds\Big\} {}\nonumber\\
&& {} + \sigma \frac{h^{\lambda}}{p^{\zeta}}\int_{\Gamma_{\textrm{int}}}
[K(x)\nabla u \cdot {\bm n}][K(x)\nabla v \cdot {\bm n}]ds. {}
\end{eqnarray}
Applying the Schwarz inequality, (\ref{eq8})
and (\ref{eq9}), we get
\begin{eqnarray*}
&&\qquad B(u,v)\leq \max \Bigg\{3,\frac{Cp^\theta}{h^\nu},
\frac{Cp^\zeta}{4\sigma h^\lambda}+1\Bigg\}\\
&& \cdot \Big\{\sum_{E \in P_h}\Big(\|u\|^2_{\ast}
+\frac{h^\nu}{p^\theta}\|K(x)\nabla u \cdot {\bm \mu}\|^2_{H^{-1/2}(\partial E)}\Big)\\
&&\qquad +\sigma\frac{h^\lambda}{p^\zeta}
\|[K(x)\nabla u \cdot {\bm n}]\|^2_{L^2(\Gamma_{\textrm {int}})}\Big\}^{1/2}\\
&& \cdot \Big\{\sum_{E \in P_h}\Big(\|v\|^2_{\ast}
+\frac{h^\nu}{p^\theta}\|K(x)\nabla v \cdot {\bm \mu}\|^2_{H^{-1/2}(\partial E)}\Big)\\
&&\qquad +\sigma\frac{h^\lambda}{p^\zeta}
\|[K(x)\nabla v \cdot {\bm n}]\|^2_{L^2(\Gamma_{\textrm {int}})}\Big\}^{1/2}.
\end{eqnarray*}
Denote $M=\max \Bigg\{3,\frac{Cp^\theta}{h^\nu},
\frac{Cp^\zeta}{4\sigma h^\lambda}+1\Bigg\}$, where $C=\max\{\frac{1}{K_0},1\}$, hence the proof is completed.
\end{proof}
\subsection{The Inf-sup condition}
\subsubsection{The auxiliary problems}
Given an arbitrary $u \in V$, find for every $E \in P_h$ the function $z_E$, such that
\begin{eqnarray}\label{eq11}
\begin{split}
-\nabla \cdot (K(x)\nabla z_E) +z_E &= 0, &&\textrm{in}\ E,\\
K(x)\nabla z_E \cdot {\bm \mu} &= K(x)\nabla u \cdot {\bm \mu}, &&\textrm{on}\ \partial E.
\end{split}
\end{eqnarray}
The equivalent variational formulation of (\ref{eq11}) is that given $u \in V$,
find $z_E \in H^1(E)$ such that
\begin{eqnarray}
(z_E,v)_{\ast,E} = \int_{\partial E}(K(x)\nabla u \cdot {\bm \mu})\gamma_0 vds,
\quad v \in H^1(E),\label{eq10}
\end{eqnarray}
where $(\cdot,\cdot)_{\ast,E}$ denotes the inner product in $H^1(E)$, and it can
be proved that $(v,v)_{\ast,E}=\|v\|^2_{\ast}$ easily. By the generalized
Lax-Milgram\ theorem it follows that the problem (\ref{eq10}) has a unique solution
$z_E \in H^1(E)$.
\begin{remark}\label{remark1}
Substituting $z_E$ and $u$ for $v$ in (\ref{eq10}),
we obtain the following two identities:
\begin{eqnarray*}
\|z_E\|^2_{\ast}&=&\int_{\partial E}(K(x)\nabla u \cdot {\bm \mu})\gamma_0 z_Eds,\\
(z_E,u)_{\ast,E}&=&\int_{\partial E}(K(x)\nabla u \cdot {\bm \mu})\gamma_0 uds.
\end{eqnarray*}
\end{remark}

From \cite{[2]A.Romkes,[6]F.Brezzi}, we know that the following result holds.
\begin{thm}\label{eq12}
Given $u \in V$, let $z_E=z_E(u)$ be the unique solution to (\ref{eq10}), then
the following relation holds:
\begin{eqnarray*}
\|z_E\|_{\ast}=\|K(x)\nabla u \cdot {\bm \mu}\|_{H^{-1/2}(\partial E)}.
\end{eqnarray*}
\end{thm}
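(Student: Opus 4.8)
The plan is to establish the asserted equality by proving the two inequalities $\|z_E\|_\ast \le \|K(x)\nabla u\cdot{\bm\mu}\|_{H^{-1/2}(\partial E)}$ and $\|K(x)\nabla u\cdot{\bm\mu}\|_{H^{-1/2}(\partial E)} \le \|z_E\|_\ast$ separately. The guiding idea is that $z_E$, defined by the variational problem (\ref{eq10}), is precisely the Riesz representative in $(H^1(E),(\cdot,\cdot)_{\ast,E})$ of the boundary functional $\varphi\mapsto \langle K(x)\nabla u\cdot{\bm\mu},\varphi\rangle_{-1/2\times 1/2,\partial E}$ acting through the trace, so its $\|\cdot\|_\ast$-norm should coincide with the dual norm (\ref{eq8}) of that functional. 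Throughout I would abbreviate $g:=K(x)\nabla u\cdot{\bm\mu}\in H^{-1/2}(\partial E)$.

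For the first (upper) inequality, I would start from the identity in Remark \ref{remark1}, namely $\|z_E\|^2_\ast=\langle g,\gamma_0 z_E\rangle_{-1/2\times 1/2,\partial E}$. Applying the duality bound (\ref{eq8}) gives $\|z_E\|^2_\ast\le \|g\|_{H^{-1/2}(\partial E)}\,\|\gamma_0 z_E\|_{H^{1/2}(\partial E)}$. The key observation is then that $z_E$ itself is an admissible lift of its own trace, so by the definition (\ref{eq9}) of the $H^{1/2}$-norm as an infimum over lifts, $\|\gamma_0 z_E\|_{H^{1/2}(\partial E)}=\inf_{\gamma_0 w=\gamma_0 z_E}\|w\|_\ast\le\|z_E\|_\ast$. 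Substituting and cancelling one factor of $\|z_E\|_\ast$ (the case $z_E=0$ being trivial) yields $\|z_E\|_\ast\le\|g\|_{H^{-1/2}(\partial E)}$.

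For the second (lower) inequality, I would take an arbitrary $\varphi\in H^{1/2}(\partial E)$ and any lift $w\in H^1(E)$ with $\gamma_0 w=\varphi$. Using the variational formulation (\ref{eq10}) to rewrite the pairing as $\langle g,\varphi\rangle_{-1/2\times 1/2,\partial E}=(z_E,w)_{\ast,E}$, and then the Cauchy--Schwarz inequality for $(\cdot,\cdot)_{\ast,E}$, I obtain $|\langle g,\varphi\rangle_{-1/2\times 1/2,\partial E}|\le\|z_E\|_\ast\,\|w\|_\ast$. Since this holds for every lift $w$ of $\varphi$, taking the infimum over such $w$ and invoking (\ref{eq9}) gives $|\langle g,\varphi\rangle_{-1/2\times 1/2,\partial E}|\le\|z_E\|_\ast\,\|\varphi\|_{H^{1/2}(\partial E)}$. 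Dividing by $\|\varphi\|_{H^{1/2}(\partial E)}$ and taking the supremum over $\varphi$, the definition (\ref{eq8}) then yields $\|g\|_{H^{-1/2}(\partial E)}\le\|z_E\|_\ast$. Combining the two inequalities completes the proof.

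The argument is essentially a duality computation, so I do not anticipate a serious obstacle; the only delicate point is keeping the interplay between the two infima/suprema consistent — in the upper bound one exploits that $z_E$ is \emph{one particular} competitor in the infimum (\ref{eq9}), whereas in the lower bound one must first establish the bound \emph{uniformly} over all competitors $w$ before passing to the infimum. One should also note the implicit use of surjectivity of the trace operator $\gamma_0:H^1(E)\to H^{1/2}(\partial E)$, which guarantees that the infimum in (\ref{eq9}) is taken over a nonempty set, so that every $\varphi$ admits a lift $w$; this is the standard trace theorem on the Lipschitz element $E$.
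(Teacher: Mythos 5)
Your proof is correct. Note that the paper itself does not prove this theorem at all: it simply states ``From \cite{[2]A.Romkes,[6]F.Brezzi}, we know that the following result holds'' and moves on, so there is no internal argument to compare against. What you have written is the standard Riesz-representation/duality computation that those references rely on, and it supplies exactly the missing justification. Both directions are sound: the upper bound correctly uses the identity $\|z_E\|_\ast^2=\langle K\nabla u\cdot{\bm\mu},\gamma_0 z_E\rangle$ from Remark \ref{remark1} together with the fact that $z_E$ is itself an admissible competitor in the infimum (\ref{eq9}), and the lower bound correctly establishes the estimate uniformly over all lifts $w$ of $\varphi$ before passing to the infimum, then takes the supremum in (\ref{eq8}). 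Your closing remarks on the order of the infima/suprema and on the surjectivity of the trace operator (needed so that every $\varphi\in H^{1/2}(\partial E)$ admits a lift and the infimum in (\ref{eq9}) is over a nonempty set) address the only genuinely delicate points. The argument could be stated in one line as ``$z_E$ is the Riesz representative of the functional $v\mapsto\langle K\nabla u\cdot{\bm\mu},\gamma_0 v\rangle$ and (\ref{eq9}) makes $\gamma_0$ a quotient map, so the dual norm equals the norm of the representative,'' but your two-inequality presentation is the careful way to write it out and is fully consistent with the paper's definitions (\ref{eq8})--(\ref{eq9}) and with (\ref{eq10}).
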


\subsubsection{Inf-sup condition on the space V}
In this section, we prove that bilinear form $B(\cdot,\cdot)$ satifies
the Inf-Sup condition with respect to the norm $|||\cdot|||$, defined
by (\ref{eq1}). Let us introduce the extension operator
$\Psi_E:\ H^1(E) \rightarrow V$,
\begin{eqnarray*}
\quad \Psi_E(v_E)=\left\{
\begin{array}{ll}
v_E,\ &\textrm{in}\ E,\\
0,\ &\textrm{in}\ \Omega\backslash E.
\end{array}\right.
\end{eqnarray*}
Hence, given a function $u \in V$, we can solve (\ref{eq10}) for a set
of functions $z_E(u)$ and construct a function $\hat{u} \in V$, such that
\begin{eqnarray}\label{eq15}
\hat{u}=u + \beta \sum_{E \in P_h}\Psi_E(z_E),
\end{eqnarray}
where $\beta \in \mathbb{R}$.
\begin{lem}\label{lem1}
Given $u \in V$, then for every $\beta \in \mathbb{R}$ there exists a
strictly positive $\xi_1=\xi_1(h,p)$ such that
\begin{eqnarray*}
|||\hat{u}||| \leq \xi_1 |||u|||.
\end{eqnarray*}
\end{lem}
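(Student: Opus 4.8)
The plan is to exploit the triangle inequality for $|||\cdot|||$ together with the special structure of the auxiliary functions $z_E$. Writing $z=\sum_{E\in P_h}\Psi_E(z_E)$, we have $\hat u=u+\beta z$, so $|||\hat u|||\le |||u|||+|\beta|\,|||z|||$, and the whole task reduces to estimating $|||z|||$ by a fixed multiple of $|||u|||$. The key observation is that $z$ inherits \emph{exactly} the boundary fluxes of $u$, through the Neumann data in (\ref{eq11}), so each of the three contributions to $|||z|||^2$ can be matched term by term against a contribution to $|||u|||^2$.

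First I would treat the volume term $\sum_E\|z_E\|^2_{\ast}$. By Theorem \ref{eq12} one has $\|z_E\|_{\ast}=\|K(x)\nabla u\cdot{\bm\mu}\|_{H^{-1/2}(\partial E)}$, hence $\sum_E\|z_E\|^2_{\ast}=\sum_E\|K(x)\nabla u\cdot{\bm\mu}\|^2_{H^{-1/2}(\partial E)}$, which is precisely $\tfrac{p^\theta}{h^\nu}$ times the flux term appearing in $|||u|||^2$. Next, the boundary condition $K(x)\nabla z_E\cdot{\bm\mu}=K(x)\nabla u\cdot{\bm\mu}$ on $\partial E$ gives immediately $\|K(x)\nabla z_E\cdot{\bm\mu}\|_{H^{-1/2}(\partial E)}=\|K(x)\nabla u\cdot{\bm\mu}\|_{H^{-1/2}(\partial E)}$, so the second term of $|||z|||^2$ equals the corresponding term of $|||u|||^2$ identically. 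The same boundary condition, read from the two elements $E_i,E_j$ meeting along an interior edge, forces $[K(x)\nabla z\cdot{\bm n}]=[K(x)\nabla u\cdot{\bm n}]$ on $\Gamma_{\mathrm{int}}$, so the stabilization term of $|||z|||^2$ also coincides with that of $|||u|||^2$.

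Collecting these three identities, and discarding the nonnegative quantity $\sum_E\|u\|^2_{\ast}$ that is present in $|||u|||^2$ but absent from $|||z|||^2$, I obtain $|||z|||^2\le\bigl(\tfrac{p^\theta}{h^\nu}+1\bigr)|||u|||^2$. Substituting back yields $|||\hat u|||\le\bigl(1+|\beta|\sqrt{\tfrac{p^\theta}{h^\nu}+1}\bigr)|||u|||$, so one may take $\xi_1(h,p)=1+|\beta|\sqrt{\tfrac{p^\theta}{h^\nu}+1}>0$, which is strictly positive and depends only on $h$, $p$ (and the fixed $\beta$). The only delicate point is the jump identity on $\Gamma_{\mathrm{int}}$: one must check that the Neumann data of the local problems, matched on each shared edge, genuinely reproduce the flux jump of $u$, which requires careful bookkeeping of the outward-normal orientations on the two sides (the signs cancel because the identity holds for $z$ and $u$ simultaneously). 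Once this is settled, the remaining work is merely a rewriting of the terms of the norm.
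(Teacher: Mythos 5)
Your argument is correct, and it reaches the lemma by a genuinely different (and somewhat more elementary) decomposition than the paper. The paper substitutes $\hat u=u+\beta\sum_{E}\Psi_E(z_E)$ directly into the definition (\ref{eq1}), expands the volume term as $\|u\|^2_{\ast}+2\beta(u,z_E)_{\ast,E}+\beta^2\|z_E\|^2_{\ast}$, controls the cross term with Young's inequality and Theorem \ref{eq12}, and uses $K(x)\nabla\hat u\cdot{\bm \mu}=(1+\beta)K(x)\nabla u\cdot{\bm \mu}$ on $\partial E$ so that the flux and jump contributions carry an exact factor $(1+\beta)^2$; this yields $\xi_1=\sqrt{\max\{2,(1+\beta)^2+2\beta^2p^{\theta}/h^{\nu}\}}$. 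You instead apply the triangle inequality for $|||\cdot|||$ to $\hat u=u+\beta z$ and bound $|||z|||$ term by term, using the same two ingredients (Theorem \ref{eq12} for the volume term, the Neumann data of (\ref{eq11}) for the flux and jump terms); your bookkeeping giving $|||z|||^2\le(p^{\theta}/h^{\nu}+1)|||u|||^2$ is right, and your caution about normal orientations in the jump identity is exactly the point one must check (it is harmless, as you say). Both routes produce a valid $\xi_1(h,p)=O\bigl(1+|\beta|\sqrt{p^{\theta}/h^{\nu}}\bigr)$. What the paper's expansion buys is the sharper factor $|1+\beta|$ on the boundary terms (your triangle inequality replaces it by the cruder $1+|\beta|$, losing the cancellation that would occur near $\beta=-1$) and an explicit closed form for $\xi_1$ that is then evaluated numerically in Corollaries \ref{cor3} and \ref{cor2}; what your route buys is the avoidance of the cross term and of Young's inequality altogether. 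Since the lemma only asserts the existence of a strictly positive $\xi_1(h,p)$, and Lemma \ref{lem2} ultimately forces $|\beta|$ small (so the two constants are comparable there), your proof is an acceptable substitute.
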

\begin{proof}
Substitution of the definition of $\hat{u}$ into (\ref{eq1}) and recalling
from (\ref{eq11}) that $K(x)\nabla z_E \cdot {\bm \mu}
=K(x)\nabla u \cdot {\bm \mu}$ on $\partial E$, we obtain
\begin{eqnarray*}
|||\hat{u}|||^2 &=& \sum_{E \in P_h} \Big\{\|u\|^2_{\ast}
+2(u,\beta z_E)_{\ast,E}+\|\beta z_E\|^2_{\ast}{}\\
&& {} +(1+\beta)^2 \frac{h^\nu}{p^\theta}
\|K(x)\nabla u \cdot {\bm \mu}\|^2_{H^{-1/2}(\partial E)}\Big\}{}\\
&& {} +\sigma \frac{h^\lambda}{p^\zeta}
(1+\beta)^2\|[K(x)\nabla u \cdot {\bm n}]\|^2_{L^2(\Gamma_{\textrm{int}})}.{}
\end{eqnarray*}
Using the Schwarz inequality, triangle inequality, Theorem \ref{eq12},
and Young's inequality as follows:
\begin{eqnarray}\label{eq31}
2(u,z_E)_{\ast,E} \leq \varepsilon \|u\|^2_{\ast}
+\frac{1}{\varepsilon}\|z_E\|^2_{\ast},\ \varepsilon > 0,
\end{eqnarray}
we obtain (here taking $\varepsilon =1$)
\begin{eqnarray*}
|||\hat{u}|||^2 &\leq& \sum_{E \in P_h} \Big\{2\|u\|^2_{\ast}
+\Big((1+\beta)^2 +\frac{2\beta^2 p^\theta}{h^\nu}\Big)\frac{h^\nu}{p^\theta}
\|K(x)\nabla u \cdot {\bm \mu}\|^2_{H^{-1/2}(\partial E)}\Big\}{}\\
&&{} +(1+\beta)^2\sigma \frac{h^\lambda}{p^\zeta}
\|[K(x)\nabla u \cdot {\bm n}]\|^2_{L^2(\Gamma_{\textrm{int}})}.{}
\end{eqnarray*}
Thus, the assertion holds with
\begin{eqnarray}
\xi_1 = \sqrt{\max\Big\{2,(1+\beta)^2
+ \frac{2\beta^2 p^\theta}{h^\nu}\Big\}}.\nonumber
\end{eqnarray}
\end{proof}

\begin{lem}\label{lem2}
Given $u \in V$, then there exists $\xi_2=\xi_2(\sigma,h,p)>0$ such that
\begin{eqnarray}
B(u,\hat{u}) \geq \xi_2|||u|||^2.\label{eq120414-1}
\end{eqnarray}
\end{lem}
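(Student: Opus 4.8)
The plan is to combine the bilinearity of $B(\cdot,\cdot)$ with the special structure of $\hat u = u + \beta\sum_{E\in P_h}\Psi_E(z_E)$ and pair the resulting estimate with Lemma~\ref{lem1} to obtain the inf-sup condition. Writing $w=\sum_{E\in P_h}\Psi_E(z_E)$, so that $w|_E=z_E$, I would first split $B(u,\hat u)=B(u,u)+\beta B(u,w)$. For the diagonal term the two element-boundary flux integrals cancel pointwise and the two interface average-times-jump integrals cancel as well, leaving
\[
B(u,u)=\sum_{E\in P_h}\|u\|^2_{\ast}+\sigma\frac{h^{\lambda}}{p^{\zeta}}\|[K(x)\nabla u\cdot\bm{n}]\|^2_{L^2(\Gamma_{\textrm{int}})},
\]
which already supplies two of the three pieces of $|||u|||^2$ with the correct sign.

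The heart of the argument is the evaluation of $B(u,w)$. Since $\Psi_E(z_E)$ vanishes outside $E$, the volume integral contributes $\sum_E (u,z_E)_{\ast,E}$. On each $\partial E$ the Neumann condition $K(x)\nabla z_E\cdot\bm{\mu}=K(x)\nabla u\cdot\bm{\mu}$ from the auxiliary problem~(\ref{eq11}) lets me replace $K(x)\nabla w\cdot\bm{\mu}$ by $K(x)\nabla u\cdot\bm{\mu}$; together with the two identities of Remark~\ref{remark1}, the element-boundary term becomes $\sum_E\big[(z_E,u)_{\ast,E}-\|z_E\|^2_{\ast}\big]$. Reading the same Neumann matching across a shared edge (with opposite outward normals on the two sides) yields the key flux identity $[K(x)\nabla w\cdot\bm{n}]=[K(x)\nabla u\cdot\bm{n}]$ on $\Gamma_{\textrm{int}}$, so the stabilization integral reproduces itself and the remaining interface integral collapses to $\int_{\Gamma_{\textrm{int}}}\langle w-u\rangle[K(x)\nabla u\cdot\bm{n}]\,ds$. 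Using $(u,z_E)_{\ast,E}=(z_E,u)_{\ast,E}$ and Theorem~\ref{eq12} to identify $\|z_E\|_{\ast}=\|K(x)\nabla u\cdot\bm{\mu}\|_{H^{-1/2}(\partial E)}$, I collect everything into
\begin{align*}
B(u,\hat u) &= \sum_{E\in P_h}\|u\|^2_{\ast}+2\beta\sum_{E\in P_h}(z_E,u)_{\ast,E}-\beta\sum_{E\in P_h}\|z_E\|^2_{\ast}\\
&\quad +(1+\beta)\sigma\frac{h^{\lambda}}{p^{\zeta}}\|[K(x)\nabla u\cdot\bm{n}]\|^2_{L^2(\Gamma_{\textrm{int}})}+\beta\int_{\Gamma_{\textrm{int}}}\langle w-u\rangle[K(x)\nabla u\cdot\bm{n}]\,ds.
\end{align*}

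It then remains to choose $\beta$ and absorb the cross terms. Because the term $-\beta\sum_E\|z_E\|^2_{\ast}$ must generate the positive contribution $\frac{h^{\nu}}{p^{\theta}}\sum_E\|z_E\|^2_{\ast}$ present in $|||u|||^2$, I would take $\beta<0$ (for instance $\beta=-\tfrac12$), which also keeps the stabilization coefficient $1+\beta$ positive. The symmetric cross term $2\beta\sum_E(z_E,u)_{\ast,E}$ is controlled by Young's inequality $|(z_E,u)_{\ast,E}|\le\frac{\delta}{2}\|u\|^2_{\ast}+\frac{1}{2\delta}\|z_E\|^2_{\ast}$, with $\delta$ tuned so that the coefficients of both $\sum_E\|u\|^2_{\ast}$ and $\sum_E\|z_E\|^2_{\ast}$ remain strictly positive.

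The hard part will be the last interface integral $\int_{\Gamma_{\textrm{int}}}\langle w-u\rangle[K(x)\nabla u\cdot\bm{n}]\,ds$, which is not sign-definite and whose factor $\langle w-u\rangle$ is not directly controlled by $|||\cdot|||$. I would split it by Young's inequality, absorbing the flux-jump part into the now-positive stabilization term and bounding $\|\langle w-u\rangle\|_{L^2(\Gamma_{\textrm{int}})}$ through an elementwise trace inequality, $\|z_E-u\|_{L^2(\partial E)}\le C(h)\|z_E-u\|_{\ast}\le C(h)\big(\|z_E\|_{\ast}+\|u\|_{\ast}\big)$, whose output is absorbed into the $\sum_E\|u\|^2_{\ast}$ and $\sum_E\|z_E\|^2_{\ast}$ terms. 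Crucially, since $\xi_2$ is permitted to depend on $h,p,\sigma$, these $h$-dependent trace constants need not be uniform and are simply swept into the final constant. Taking $\xi_2$ to be the minimum of the surviving positive coefficients of $\sum_E\|u\|^2_{\ast}$, $\frac{h^{\nu}}{p^{\theta}}\sum_E\|z_E\|^2_{\ast}$ and $\sigma\frac{h^{\lambda}}{p^{\zeta}}\|[K(x)\nabla u\cdot\bm{n}]\|^2_{L^2(\Gamma_{\textrm{int}})}$ then yields $B(u,\hat u)\ge\xi_2|||u|||^2$ with $\xi_2=\xi_2(\sigma,h,p)>0$.
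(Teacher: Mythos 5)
Your proposal follows essentially the same route as the paper: after expanding $B(u,\hat u)$ via the Neumann condition of the auxiliary problem and the identities of Remark~\ref{remark1}, you arrive at exactly the paper's intermediate identity (\ref{eq14}) (your $\beta\int_{\Gamma_{\textrm{int}}}\langle w-u\rangle[K\nabla u\cdot\bm{n}]\,ds$ is just the paper's two average terms combined), and you then absorb the cross terms with Young's inequality and Theorem~\ref{eq12}, choosing a small negative $\beta$, exactly as the paper does. The only substantive difference is your handling of the interface integral, where you invoke an explicit $h$-dependent trace constant rather than the paper's implicit unit-constant bound; this is if anything more careful, at the cost of a less explicit $\xi_2$, which the lemma as stated does not require.
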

\begin{proof}
By replacing $v$ by $\hat{u}$ in the definition of $B(u,v)$, and recalling
that $K(x)\nabla z_E \cdot {\bm \mu}=K(x)\nabla u \cdot {\bm \mu}$, we get
\begin{eqnarray*}
&&B(u,\hat{u}) = \sum_{E \in P_h}\Big\{\|u\|^2_{\ast}
+\beta(u,z_E)_{\ast,E}-\beta\int_{\partial E}z_E (K(x)\nabla u \cdot {\bm \mu})ds{}\\
&&{} +\beta \int_{\partial E}u(K(x)\nabla u \cdot {\bm \mu})ds\Big\}
+\beta \int_{\Gamma_{\textrm{int}}}\langle z_E \rangle [K(x)\nabla u \cdot {\bm n}]ds{}\\
&&{} -\beta\int_{\Gamma_{\textrm{int}}}\langle u \rangle[K(x)\nabla u \cdot {\bm n}]ds {}
+\sigma\frac{h^\lambda}{p^\zeta}(1+\beta)\|[K(x)\nabla u \cdot {\bm n}]\|^2_{L^2(\Gamma_{\textrm{int}})}.
\end{eqnarray*}
For simplicity, the traces $\gamma_0 z_E$ and $\gamma_0 u$ have been denoted as $z_E$ and $u$, respectively.
Now, using the identities given in Remark \ref{remark1}, we can rewrite
the above expression as follows
\begin{eqnarray}\label{eq14}
B(u,\hat{u}) &=& \sum_{E \in P_h}\Big\{\|u\|^2_{\ast}
+2\beta(u,z_E)_{\ast,E}-\beta\|z_E\|^2_{\ast}\Big\}{}\nonumber\\
&&{} +\beta \int_{\Gamma_{\textrm{int}}}\langle z_E \rangle [K(x)\nabla u \cdot {\bm n}]ds
-\beta\int_{\Gamma_{\textrm{int}}}\langle u \rangle[K(x)\nabla u \cdot {\bm n}]ds{}\nonumber\\
&&{}+ \sigma\frac{h^\lambda}{p^\zeta}(1+\beta)\|[K(x)\nabla u \cdot {\bm n}]\|^2_{L^2(\Gamma_{\textrm{int}})}.{}
\end{eqnarray}

If we take a closer look at the terms involving integrals over $\Gamma_{\textrm{int}}$,
we see that
\begin{eqnarray*}
&& \beta \int_{\Gamma_{\textrm{int}}}\langle z_E \rangle [K(x)\nabla u \cdot {\bm n}]ds
\geq -\frac{|\beta|}{4}\sum_{E \in P_h}\|z_E\|^2_{\ast}
-\frac{|\beta|}{2}\|[K(x)\nabla u \cdot {\bm n}]\|^2_{L^2(\Gamma_{\textrm{int}})},\\
&& -\beta \int_{\Gamma_{\textrm{int}}}\langle u \rangle [K(x)\nabla u \cdot {\bm n}]ds
\geq -\frac{|\beta|}{4}\sum_{E \in P_h}\|u\|^2_{\ast}
-\frac{|\beta|}{2}\|[K(x)\nabla u \cdot {\bm n}]\|^2_{L^2(\Gamma_{\textrm{int}})}.
\end{eqnarray*}
Now, back substitution of these two results into (\ref{eq14}), yields
\begin{eqnarray*}
B(u,\hat{u}) &\geq& \sum_{E \in P_h}\Big\{(1-\frac{|\beta|}{4})\|u\|^2_{\ast}
+2\beta(u,z_E)_{\ast,E}
-\big(\beta+\frac{|\beta|}{4}\big)\|z_E\|^2_{\ast}\Big\}{}\\
&&{}+\big(\sigma\frac{h^\lambda}{p^\zeta}(1+\beta)
-|\beta|\big)\|[K(x)\nabla u \cdot {\bm n}]\|^2_{L^2(\Gamma_{\textrm{int}})}.
\end{eqnarray*}
Using (\ref{eq31}), Cauchy-Schwarz inequality and Theorem \ref{eq12}, we obtain
\begin{eqnarray*}
B(u,\hat{u}) &\geq& \sum_{E \in P_h}\Big\{
\big(1-\varepsilon|\beta|-\frac{|\beta|}{4}\big)\|u\|^2_{\ast}{}\\
&&{}-\big(\beta+\frac{|\beta|}{4}+\frac{|\beta|}{\varepsilon}\big)
\|K(x)\nabla u \cdot {\bm \mu}\|^2_{H^{-1/2}(\partial E)}\Big\}{}\\
&&{}+\big(\sigma\frac{h^\lambda}{p^\zeta}(1+\beta)
-|\beta|\big)\|[K(x)\nabla u \cdot {\bm n}]\|^2_{L^2(\Gamma_{\textrm{int}})}.{}
\end{eqnarray*}
With $\beta < 0$, we have
\begin{eqnarray*}
B(u,\hat{u}) &\geq& \sum_{E \in P_h}\Big\{
\big(1-\varepsilon|\beta|-\frac{|\beta|}{4}\big)\|u\|^2_{\ast}{}\\
&&{}+\big(\frac{3|\beta|}{4}-\frac{|\beta|}{\varepsilon}\big)
\|K(x)\nabla u \cdot {\bm \mu}\|^2_{H^{-1/2}(\partial E)}\Big\}{}\\
&&{}+\big(\sigma\frac{h^\lambda}{p^\zeta}(1-|\beta|)
-|\beta|\big)\|[K(x)\nabla u \cdot {\bm n}]\|^2_{L^2(\Gamma_{\textrm{int}})}.{}
\end{eqnarray*}
The second term in the right hand side is only positive for $\varepsilon > 4/3$.
If we take $\varepsilon=2$, then we get
\begin{eqnarray*}
B(u,\hat{u})\geq \min \Big\{1-\frac94|\beta|,\frac{|\beta|p^{\theta}}{4h^{\nu}},
1-|\beta|-\frac{|\beta|p^{\zeta}}{\sigma h^{\lambda}}\Big\} |||u|||^2.
\end{eqnarray*}
It is clear that, given the parameters
$\sigma, \lambda, \zeta, \nu$ and $\theta$, we can always find a
coefficient $\beta$ such that there exists a $\xi_2(\sigma,h,p)$, denoted by
\begin{eqnarray}\label{eq17}
\xi_2 = \min \Big\{1-\frac94|\beta|,\frac{|\beta|p^{\theta}}{4h^{\nu}},
1-|\beta|-\frac{|\beta|p^{\zeta}}{\sigma h^{\lambda}}\Big\},
\end{eqnarray}
that satisfies the inequality (\ref{eq120414-1}).
\end{proof}

\begin{thm}\label{thm2}
Given $\sigma > 0$, there exists $\gamma = \gamma(\sigma,h,p)>0$
such that
\begin{eqnarray}
\sup_{v \in V\backslash\{0\}} \frac{|B(u,v)|}{|||v|||} \geq \gamma |||u|||,
\qquad \forall u \in V.
\end{eqnarray}
\end{thm}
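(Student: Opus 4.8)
The plan is to deduce the inf-sup bound directly from the two preparatory lemmas, using $\hat{u}$ as a distinguished test direction rather than searching over all of $V$. Given $u \in V$, Lemma \ref{lem1} controls the size of $\hat{u}$ from above and Lemma \ref{lem2} controls the value $B(u,\hat{u})$ from below, both in terms of $|||u|||$. Since the supremum over all $v \in V\backslash\{0\}$ dominates the single choice $v=\hat{u}$, it suffices to estimate the quotient $|B(u,\hat{u})|/|||\hat{u}|||$ and then chain the two inequalities.

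First I would dispose of the trivial case $u=0$: both sides of the asserted inequality vanish, so there is nothing to prove. Assume therefore $u \neq 0$, i.e. $|||u|||>0$. Choosing $\beta$ as at the end of the proof of Lemma \ref{lem2} so that $\xi_2=\xi_2(\sigma,h,p)>0$, we have $B(u,\hat{u}) \geq \xi_2|||u|||^2 > 0$; in particular $\hat{u}\neq 0$, since otherwise $B(u,\hat{u})=B(u,0)=0$. Thus $\hat{u}$ is an admissible competitor in the supremum, which is the only nondegeneracy point that needs to be noted before combining the estimates.

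Next I would chain the two bounds. Estimating the supremum from below by the value at $v=\hat{u}$, then applying Lemma \ref{lem2} in the numerator and Lemma \ref{lem1} in the denominator, gives
\begin{eqnarray*}
\sup_{v \in V\backslash\{0\}} \frac{|B(u,v)|}{|||v|||}
\geq \frac{|B(u,\hat{u})|}{|||\hat{u}|||}
\geq \frac{\xi_2\,|||u|||^2}{\xi_1\,|||u|||}
= \frac{\xi_2}{\xi_1}\,|||u|||.
\end{eqnarray*}
Setting $\gamma=\gamma(\sigma,h,p)=\xi_2/\xi_1$ yields the claim, and $\gamma>0$ because $\xi_1>0$ by Lemma \ref{lem1} and $\xi_2>0$ for the chosen $\beta$ by Lemma \ref{lem2}.

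The only real subtlety — and it is a point to check rather than a genuine obstacle — is the simultaneous positivity of $\xi_2$ for an admissible $\beta$: one must be sure that the three quantities defining $\xi_2$ in (\ref{eq17}) can be made positive at once by a single $\beta<0$, for the given parameters $\sigma,\lambda,\zeta,\nu,\theta$. This is precisely the feasibility assertion established at the close of Lemma \ref{lem2}, so it can simply be invoked here; everything else reduces to the elementary division above, and the resulting constant $\gamma$ depends only on $\sigma,h,p$ as required.
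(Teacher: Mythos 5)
Your proposal is correct and follows essentially the same route as the paper: bound the supremum from below by the single test function $v=\hat{u}$, apply Lemma \ref{lem2} to the numerator and Lemma \ref{lem1} to the denominator, and set $\gamma=\xi_2/\xi_1$. The only difference is that you explicitly handle the degenerate cases $u=0$ and $\hat{u}=0$, which the paper passes over silently; this is a small but welcome addition of rigor, not a change of method.
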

\begin{proof}
By definition of the supremum, we can obtain
\begin{eqnarray}
\sup_{v \in V\backslash\{0\}} \frac{|B(u,v)|}{|||v|||} \geq
\frac{|B(u,\hat{u})|}{|||\hat{u}|||},\qquad \forall u \in V.
\end{eqnarray}
where $\hat{u}$ is defined by (\ref{eq15}). Next, by applying Lemmas
\ref{lem1} and \ref{lem2}, we obtain
\begin{eqnarray}
\sup_{v \in V\backslash\{0\}} \frac{|B(u,v)|}{|||v|||} \geq
\frac{|B(u,\hat{u})|}{|||\hat{u}|||} \geq
\frac{\xi_2(\sigma,h,p)}{\xi_1(\sigma,h,p)}|||u|||,
\qquad \forall u \in V.
\end{eqnarray}
Taking $\gamma = \xi_2/\xi_1$, we finish the proof.
\end{proof}

\begin{cor}\label{cor3}
If $\lambda = \nu = \theta = \zeta =0$, 
then the inf-sup coefficient $\gamma$ is a constant.
\end{cor}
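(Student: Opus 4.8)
The plan is to specialize the closed-form expressions for $\xi_1$ and $\xi_2$ already produced in Lemmas \ref{lem1} and \ref{lem2}, and to observe that all dependence on the mesh size $h$ and the polynomial-order parameter $p$ simply collapses. First I would recall from Theorem \ref{thm2} that $\gamma = \xi_2/\xi_1$, where by Lemma \ref{lem1}
\begin{eqnarray*}
\xi_1 = \sqrt{\max\Big\{2,(1+\beta)^2 + \frac{2\beta^2 p^\theta}{h^\nu}\Big\}},
\end{eqnarray*}
and $\xi_2$ is given explicitly by (\ref{eq17}).

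Next I would set $\lambda = \nu = \theta = \zeta = 0$, so that every power $h^\nu$, $h^\lambda$, $p^\theta$, $p^\zeta$ appearing in these formulas reduces to $1$. Substituting directly yields
\begin{eqnarray*}
\xi_1 = \sqrt{\max\big\{2,\,(1+\beta)^2 + 2\beta^2\big\}},
\qquad
\xi_2 = \min\Big\{1 - \frac{9}{4}|\beta|,\ \frac{|\beta|}{4},\ 1 - |\beta| - \frac{|\beta|}{\sigma}\Big\}.
\end{eqnarray*}
Both quantities now depend only on the fixed scalars $\beta$ and $\sigma$, and no longer on $h$ or $p$. Consequently $\gamma = \xi_2/\xi_1$ is independent of the discretization, i.e.\ a constant.

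The only point requiring any care is to guarantee that $\gamma$ is a genuine (strictly positive) constant, which amounts to ensuring $\xi_2 > 0$. Inspecting the three entries of the minimum with the parameters set to zero, positivity holds precisely when $0 < |\beta| < \min\{4/9,\ \sigma/(\sigma+1)\}$; since $\sigma > 0$ is given, such a $\beta$ always exists. With this $\beta$ fixed once and for all, $\xi_1$ and $\xi_2$ are positive constants, and hence so is $\gamma$. There is essentially no obstacle in this argument: the corollary is a direct specialization of the general inf-sup estimate, and the substitution is the entire content of the proof.
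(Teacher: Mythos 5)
Your proposal is correct and follows essentially the same route as the paper: specialize $\xi_1$ from Lemma \ref{lem1} and $\xi_2$ from (\ref{eq17}) with $\lambda=\nu=\theta=\zeta=0$ and observe that all $h$- and $p$-dependence disappears. The paper simply makes the choice concrete (taking $\sigma=1$ and $|\beta|=4/10$, which lies in the admissible range $0<|\beta|<\min\{4/9,\sigma/(\sigma+1)\}$ you identify, giving $\xi_2=1/10$ and $\gamma=1/\sqrt{228}$), whereas you keep $\sigma$ and $\beta$ general and verify positivity of $\xi_2$ explicitly.
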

\begin{proof}
For simplicity, we set $\sigma = 1$. Choosing $\beta = 4/10$ and using (\ref{eq17}),
we have $\xi_1 = \sqrt{288}/10$ and $\xi_2 = 1/10$,
respectively, and it follows that $\gamma = 1/\sqrt{288}$.
\end{proof}

\begin{cor}\label{cor2}
If $\lambda = \nu$ and $\theta = \zeta$, 
then
for $h^{\lambda}/p^{\zeta}<1$ the coefficient $\gamma$ is bounded
a constant $C>0$.
\end{cor}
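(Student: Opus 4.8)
The plan is to track how the inf-sup constant $\gamma=\xi_2/\xi_1$ of Theorem~\ref{thm2} depends on the discretization parameters once the exponents are tied together. Under $\lambda=\nu$ and $\theta=\zeta$, every occurrence of $p^\theta/h^\nu$ and of $p^\zeta/h^\lambda$ in Lemma~\ref{lem1} and in (\ref{eq17}) equals the single quantity $1/t$, where $t:=h^\lambda/p^\zeta=h^\nu/p^\theta$. First I would rewrite the two coefficients as functions of $t$ and $\beta$ alone,
\[
\xi_1=\sqrt{\max\Big\{2,\,(1+\beta)^2+\tfrac{2\beta^2}{t}\Big\}},\qquad
\xi_2=\min\Big\{1-\tfrac94|\beta|,\ \tfrac{|\beta|}{4t},\ 1-|\beta|-\tfrac{|\beta|}{\sigma t}\Big\}.
\]
The decisive point is that for a fixed $\beta$ the third entry of $\xi_2$ tends to $-\infty$ as $t\to0$, so $\gamma$ degenerates; hence $\beta$ must be allowed to depend on the mesh through $t$.

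Accordingly, the key step is to choose $\beta$ to scale linearly with $t$: set $\beta=-ct$ with a fixed constant $c>0$ (so $\beta<0$, as Lemma~\ref{lem2} demands). This is exactly the scaling that neutralises the dangerous terms, since then $|\beta|/(4t)=c/4$ and $|\beta|/(\sigma t)=c/\sigma$ become independent of $t$, giving
\[
\xi_2\ \ge\ \min\Big\{1-\tfrac94 c,\ \tfrac{c}{4},\ 1-c-\tfrac{c}{\sigma}\Big\}=:m
\]
for every $t\in(0,1)$. Picking $c$ small enough, e.g. $0<c<\min\{4/9,\ \sigma/(\sigma+1)\}$, makes all three entries strictly positive, so $m>0$ is a constant independent of $h$ and $p$. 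For the denominator, since $t<1$ and $|\beta|=ct<c<1/\sqrt2$ one checks $(1+\beta)^2+2\beta^2/t=(1-ct)^2+2c^2t<2$, whence the maximum is attained at $2$ and $\xi_1=\sqrt2$. Combining the two bounds yields $\gamma=\xi_2/\xi_1\ge m/\sqrt2=:C>0$, a constant that does not depend on $h$ or $p$ throughout the regime $t<1$; the same estimates give $\gamma\le(c/4)/\sqrt2$, so $\gamma$ stays bounded above as well.

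The main obstacle is conceptual rather than computational: one has to recognise that a mesh-independent $\beta$ cannot work, and that the correct remedy is the coupling $\beta\propto h^\lambda/p^\zeta$, which simultaneously keeps $|\beta|/t$ bounded (controlling both the $|\beta|/(4t)$ and $|\beta|/(\sigma t)$ terms of $\xi_2$) and forces $\beta\to0$ (controlling $\xi_1$). Once this scaling is fixed, all remaining inequalities are elementary consequences of $0<t<1$. Finally, I would observe that this argument contains Corollary~\ref{cor3} as the boundary case $t\equiv1$, where $\beta$ may be taken constant.
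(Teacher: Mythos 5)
Your proposal is correct and follows essentially the same route as the paper: the paper also takes $\sigma=1$ and chooses $\beta=4h^{\nu}/(10p^{\theta})$, i.e.\ exactly your scaling $\beta\propto t=h^{\nu}/p^{\theta}$ with the specific constant $c=4/10$, which makes $|\beta|p^{\theta}/h^{\nu}$ a fixed number and then uses $t<1$ to bound the remaining entries of $\xi_1$ and $\xi_2$, yielding $\gamma\geq 1/\sqrt{228}$. Your version is marginally more careful in taking $\beta<0$ (consistent with the sign assumption in Lemma~\ref{lem2}, which the paper's positive choice technically violates, though only $|\beta|$ enters (\ref{eq17})), but the key idea and the resulting uniform bound are the same.
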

\begin{proof}
We still set that $\sigma = 1$, but now we choose $\beta = 4h^{\nu}/10p^{\theta}$.
If we take $h^{\lambda}/p^{\zeta}<1$, we obtain the following inequalities from
(\ref{eq17})
$$\xi_1 \leq \frac{\sqrt{228}}{10},\quad \xi_2 \geq \frac{1}{10}.$$
Hence, we conclude that $\gamma \geq 1/\sqrt{228}$.
\end{proof}

\subsection{Existence and uniqueness}
\begin{lem}
If $f \in L^2(\Omega)$, then there exists a unique solution $w \in V$
to the  VBVP (\ref{eq2}) that is a solution to the problem (\ref{eq3})-(\ref{eq303}).
\end{lem}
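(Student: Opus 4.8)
The plan is to recognize the VBVP (\ref{eq2}) as a linear variational problem to which the generalized Lax--Milgram (Banach--Necas--Babuska) theorem applies, and to assemble its hypotheses from the results already established. First I would observe that $V$, being the completion of $\mathcal{M}(P_h)$ with respect to $|||\cdot|||$, is a Hilbert space, since every term in (\ref{eq1}) is the square of a Hilbert-space norm. Next I would verify that $L$ is a bounded linear functional on $V$: from $|L(v)|=|\int_\Omega fv\,dx|\leq \|f\|_{L^2(\Omega)}\|v\|_{L^2(\Omega)}$ together with $\|v\|_{L^2(\Omega)}\leq |||v|||$ (because $\|\cdot\|_{\ast}$ already controls each local $L^2(E)$ norm), we obtain $\|L\|_{V'}\leq \|f\|_{L^2(\Omega)}$.

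Two of the three structural ingredients are then in hand: continuity of $B$ (Theorem \ref{thm5}) and the inf-sup bound in the first argument (Theorem \ref{thm2}). What remains, and what I expect to be the main obstacle, is the transposed non-degeneracy condition: for every $v\in V\setminus\{0\}$ there must exist $u\in V$ with $B(u,v)\neq 0$, equivalently $\sup_{u\neq 0}|B(u,v)|/|||u|||\geq \gamma'|||v|||$. This is precisely the condition that excludes a nontrivial kernel of the adjoint and, combined with the first inf-sup estimate, promotes injectivity with closed range to bijectivity.

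To dispatch it I would exploit the near-symmetry of $B$. Splitting $B=B_s+B_a$ into its symmetric part (the volume terms $\int_E(K(x)\nabla u\cdot\nabla v+uv)\,dx$ and the penalty term) and its skew-symmetric part (the flux and average terms), a direct check gives $B(v,u)=B_s(u,v)-B_a(u,v)$, so the transposed form has the same structure as $B$ with the sign of its skew part reversed. Hence the construction used in Lemmas \ref{lem1} and \ref{lem2}---solving the auxiliary problem (\ref{eq10}) for $z_E(v)$, forming an extension of the type (\ref{eq15}), and running the same estimates---carries over with the admissible sign of $\beta$ flipped; since every intermediate bound relies only on Cauchy--Schwarz, Young's inequality (\ref{eq31}) and Theorem \ref{eq12}, all insensitive to these signs, one obtains $\check v\in V$ with $B(\check v,v)\geq \xi_2'|||v|||^2$ and $|||\check v|||\leq \xi_1'|||v|||$, whence the reverse inf-sup inequality holds with $\gamma'=\xi_2'/\xi_1'$.

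With all hypotheses verified, the generalized Lax--Milgram theorem (see \cite{[6]F.Brezzi}) yields a unique $w\in V$ solving $B(w,v)=L(v)$ for all $v\in V$, together with the stability estimate $|||w|||\leq \gamma^{-1}\|f\|_{L^2(\Omega)}$ obtained by applying the inf-sup bound of Theorem \ref{thm2} to $B(w,\cdot)=L(\cdot)$; this is the asserted continuous dependence on the data. Finally, Theorem \ref{thm1} shows that the solution of (\ref{eq3})--(\ref{eq303}) satisfies (\ref{eq2}), so by the uniqueness just established it must coincide with $w$; thus $w$ is at once the VBVP solution and a solution of the boundary-value problem, which completes the argument.
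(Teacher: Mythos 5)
Your proposal is correct, but it proves the lemma by a genuinely different route than the paper. The paper never invokes the Banach--Ne\v{c}as--Babu\v{s}ka machinery on $V$ at all: it obtains \emph{existence} by first solving the classical formulation $A(w,v)=L(v)$ in $H^1_0(\Omega)$ via Lax--Milgram, noting $w\in H^1_0(\Omega)\cap H(\Delta,\Omega)\subset V$, and then citing Theorem \ref{thm1} (consistency) to conclude that this $w$ already solves the VBVP; \emph{uniqueness} then follows in one line from positive definiteness, $B(v,v)=\sum_{E}\|v\|^2_{\ast}+\sigma\frac{h^\lambda}{p^\zeta}\|[K(x)\nabla v\cdot{\bm n}]\|^2_{L^2(\Gamma_{\mathrm{int}})}>0$ for $v\neq 0$. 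Your approach instead verifies all hypotheses of the generalized Lax--Milgram theorem directly on $V$, which is more work but buys more: it gives solvability for an arbitrary bounded functional on $V$, not merely for data of the form $\int_\Omega fv\,dx$, and it does not lean on the regularity/consistency argument of Theorem \ref{thm1} for existence. One simplification you should make: the transposed condition you identify as ``the main obstacle'' does not require the mirror construction with the sign of $\beta$ flipped. The BNB theorem only needs the qualitative non-degeneracy ``for every $v\neq 0$ there is $u$ with $B(u,v)\neq 0$,'' and this is immediate from the same positive definiteness the paper uses, since $B(v,v)>0$; the quantitative adjoint inf-sup then follows automatically from continuity, the primal inf-sup of Theorem \ref{thm2}, and non-degeneracy. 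Your sketched mirror argument (exploiting $B(v,u)=B_s(u,v)-B_a(u,v)$) is plausible but unnecessary, and as written it is only a sketch. Finally, note that your closing step tacitly assumes the boundary-value problem (\ref{eq3})--(\ref{eq303}) possesses a solution in $H(\Delta,\Omega)$; the paper makes this explicit through the classical $H^1_0$ formulation, and you should do the same to fully justify the ``that is a solution to (\ref{eq3})--(\ref{eq303})'' clause of the statement.
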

\begin{proof}
First, we introduce the classical variational formulation of the model
problem (\ref{eq3})-(\ref{eq303}) in  $H^1_0(\Omega)$:
\begin{eqnarray}\label{eq19}
\textrm{Find}\ w \in H^1_0(\Omega),s.t.,\ A(w,v)=L(v),
\quad \forall v \in H^1_0(\Omega),
\end{eqnarray}
where $L(v)$ is defined by (\ref{eq18}) and the bilinear form $A(w,v): H^1(\Omega) \times H^1(\Omega) \rightarrow \mathbb{R}$ is defined by
\begin{eqnarray*}
 A(w,v)= \int_{\Omega}(K(x)\nabla w \cdot \nabla v + wv)dx.
\end{eqnarray*}
By the generalized Lax-Milgram theorem and by equivalence of this formulation to
the problem (\ref{eq3})-(\ref{eq303}), we know that if $f \in L^2(\Omega)$ there exists
a unique solution $w \in H^1_0(\Omega) \cap H(\Delta,\Omega) \subset V$ to
(\ref{eq19}) that satisfies the model problem in a distributional sense.
Consequently, by Theorem \ref{thm1}, we know that $w \in V$ is a solution to
the VBVP (\ref{eq2}) as well.

Thus, following the existence theory for the continuous variational formulation,
we can easily prove the existence of a solution to the VBVP (\ref{eq2}), and we omit the detail of the proof.
Also, we know that the solution is unique because the bilinear form $B(u,v)$ of (\ref{eq5}) is positive definite, i.e.,
\begin{eqnarray*}
B(v,v) = \sum_{E \in P_h}\|v\|^2_{\ast}
+\sigma\frac{h^\lambda}{p^\eta}
\|[K(x)\nabla v \cdot {\bm n}]\|^2_{L^{2}(\Gamma_{\textrm{int}})}>0,
\quad \forall v \in V\backslash \{0\}.
\end{eqnarray*}
\end{proof}

\subsection{Stability}
The last requirement to ensure well posedness of the weak formulation
is stability, i.e., the VBVP (\ref{eq2}) is continuously dependent of the solution
on the input data. Corresponding to Corollary \ref{cor3} and \ref{cor2},
we have two propositions \ref{prop1} and \ref{prop2} as follows.

\begin{prop}\label{prop1}
If 
$\sigma = 1, \lambda
= \zeta = 0$, and $\nu=\theta =0$, then the
solution to the VBVP (\ref{eq2}) depends continuously on the input data,
i.e., given a small perturbation $\delta f \in L^2(\Omega)$,
then there exists a unique perturbation $\delta u \in V$ such that
$$\|\delta u\|_{H^1(P_h)} \leq |||\delta u|||
\leq C\|\delta f\|_{L^2(\Omega)},$$
where C is a constant independent of $h$ and $p$.
\end{prop}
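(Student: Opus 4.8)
The plan is to read off the stability estimate directly from the inf-sup condition, exploiting that under the present choice of parameters the inf-sup constant is a genuine constant rather than an $h,p$-dependent quantity. First I would set up the perturbation problem. By linearity of $B(\cdot,\cdot)$ and $L(\cdot)$ in the data, the perturbation $\delta u$ induced by $\delta f$ is itself the unique solution of the VBVP with data $\delta f$, i.e.
\begin{eqnarray*}
B(\delta u, v) = \int_{\Omega} \delta f\, v\, dx, \qquad \forall\, v \in V.
\end{eqnarray*}
Existence and uniqueness of such a $\delta u \in V$ are already guaranteed by the well posedness established above (the generalized Lax--Milgram argument together with the inf-sup condition of Theorem \ref{thm2}), so the only task left is the quantitative bound.

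For the core estimate I would invoke the inf-sup condition in its sharpened form. With $\sigma = 1$ and $\lambda = \nu = \theta = \zeta = 0$, Corollary \ref{cor3} gives the \emph{constant} value $\gamma = 1/\sqrt{288}$, independent of $h$ and $p$. Applying Theorem \ref{thm2} to $\delta u$ and substituting the equation yields
\begin{eqnarray*}
\gamma\, |||\delta u||| \leq \sup_{v \in V\backslash\{0\}} \frac{|B(\delta u, v)|}{|||v|||}
= \sup_{v \in V\backslash\{0\}} \frac{\left|\int_{\Omega} \delta f\, v\, dx\right|}{|||v|||}.
\end{eqnarray*}
It then remains only to bound the linear functional. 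By the Schwarz inequality together with the observation that $\|v\|^2_{L^2(\Omega)} = \sum_{E \in P_h}\|v\|^2_{L^2(E)} \leq \sum_{E \in P_h}\|v\|^2_{\ast} \leq |||v|||^2$, one has $\left|\int_{\Omega}\delta f\, v\, dx\right| \leq \|\delta f\|_{L^2(\Omega)}\,|||v|||$, and hence $|||\delta u||| \leq \gamma^{-1}\|\delta f\|_{L^2(\Omega)} = \sqrt{288}\,\|\delta f\|_{L^2(\Omega)}$.

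Finally, the left-hand inequality $\|\delta u\|_{H^1(P_h)} \leq |||\delta u|||$ follows from the stated equivalence of $\|\cdot\|_{\ast}$ and $\|\cdot\|_{H^1(E)}$ summed over $E \in P_h$, since $|||v|||^2 \geq \sum_{E \in P_h}\|v\|^2_{\ast}$ already dominates the broken $H^1$-norm up to the equivalence constant determined by $K_0$ and $K_1$. Taking $C = \sqrt{288}$ then completes the estimate. The one point that genuinely needs care — and the reason the hypotheses are stated exactly as they are — is the $h$- and $p$-independence of $C$. This is not automatic for the general bilinear form: it is secured only because the vanishing exponents collapse every weight $h^{\bullet}/p^{\bullet}$ in $|||\cdot|||$ to unity, so that Corollary \ref{cor3} supplies a true constant $\gamma$ in place of the $h,p$-dependent $\gamma(\sigma,h,p)$ of Theorem \ref{thm2}. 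The remaining manipulations are routine; no further estimate beyond the inf-sup constant and the trivial $L^2$-domination is required, and in particular the continuity constant $M$ of Theorem \ref{thm5} does not enter the a priori bound.
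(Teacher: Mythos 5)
Your proposal is correct and follows essentially the same route as the paper: set up the perturbation equation by linearity, apply the inf-sup condition of Theorem \ref{thm2} with the constant $\gamma$ supplied by Corollary \ref{cor3}, and bound the right-hand side by Cauchy--Schwarz using $\|v\|_{L^2(\Omega)}\leq |||v|||$. Your value $C=\sqrt{288}$ is actually the one consistent with Corollary \ref{cor3} (the paper's proof writes $\sqrt{228}$, apparently a typo), and your explicit treatment of the left inequality $\|\delta u\|_{H^1(P_h)}\leq |||\delta u|||$ is a small completeness improvement over the paper's version.
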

\begin{proof}
Let $\delta f \in L^2(\Omega)$ be a perturbation in the input data $f$.
Consequently, since the problem under consideration is linear, this leads
to a perturbation $\delta u \in V$ in the solution $u$, which satisfies
\begin{eqnarray}\label{eq20}
B(\delta u,v)= \int_{\Omega} \delta fvdx,\quad \forall v \in V.
\end{eqnarray}
Using Theorem \ref{thm2}, we have
\begin{eqnarray}
|||\delta u||| \leq \frac1 \gamma \sup_{v \in V\backslash\{0\}}
\frac{|B(\delta u,v)|}{|||v|||}.\label{eq120414-2}
\end{eqnarray}
Using (\ref{eq20}), (\ref{eq120414-2}) and the Cauchy-Schwarz inequality, we obtain
\begin{eqnarray}
|||\delta u||| \leq \frac1 \gamma \sup_{v \in V\backslash\{0\}}
\frac{|\int_{\Omega} \delta fvdx|}{|||v|||}
\leq \frac1 \gamma \|\delta f\|_{L^2(\Omega)}.\label{eq120414-3}
\end{eqnarray}
If $\sigma = 1, \lambda = \nu = \theta = \zeta = 0$, using Lemmas \ref{lem1} and \ref{lem2}, Theorem \ref{thm2}, Corollary \ref{cor3} and (\ref{eq120414-3}), we can take $C=\frac{1}{\gamma}=\sqrt{228}$ to complete the proof.
\end{proof}


\begin{prop}\label{prop2}
If $\lambda = \nu$, $\theta = \zeta$, and $h^{\lambda}/p^{\zeta}<1$,
then the solution $u$ to the VBVP (\ref{eq2})
depends continuously on the input data, i.e., given a perturbation $\delta f
\in L^2(\Omega)$, there exists a unique perturbation $\delta u \in V$ such
that
\begin{eqnarray*}
\|\delta u\|_{H^1(P_h)} \leq |||\delta u|||
\leq C\|\delta f\|_{L^2(\Omega)},
\end{eqnarray*}
where C is a constant independent of $h$ and $p$.
\end{prop}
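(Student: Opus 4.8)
The plan is to carry the argument through in exact parallel with Proposition~\ref{prop1}, since the two statements differ only in the parameter regime and hence only in which corollary supplies the inf-sup constant. First I would fix a perturbation $\delta f \in L^2(\Omega)$ and exploit the linearity of the VBVP (\ref{eq2}): the induced perturbation $\delta u \in V$ of the solution satisfies
\begin{eqnarray}
B(\delta u, v) = \int_{\Omega} \delta f\, v\, dx, \qquad \forall v \in V. \nonumber
\end{eqnarray}
Existence and uniqueness of this $\delta u$ are already guaranteed by the well-posedness established earlier, so the whole task reduces to an a~priori bound.

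Next I would invoke the inf-sup condition of Theorem~\ref{thm2} to write $|||\delta u||| \le \gamma^{-1} \sup_{v \ne 0} |B(\delta u, v)|/|||v|||$, exactly as in (\ref{eq120414-2}). Substituting the identity above and applying the Cauchy-Schwarz inequality together with the elementary estimate $\|v\|_{L^2(\Omega)} \le |||v|||$---which holds because the $\|\cdot\|_{\ast}$ contribution to (\ref{eq1}) already dominates the broken $L^2$ norm, independently of the parameters---yields
\begin{eqnarray}
|||\delta u||| \le \frac{1}{\gamma}\, \|\delta f\|_{L^2(\Omega)}, \nonumber
\end{eqnarray}
mirroring (\ref{eq120414-3}). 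The left-hand inequality $\|\delta u\|_{H^1(P_h)} \le |||\delta u|||$ then follows from the equivalence of $\|\cdot\|_{\ast}$ and $\|\cdot\|_{H^1(E)}$ summed over the partition, again independently of $h$ and $p$.

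The decisive step, and the only place where the hypotheses $\lambda=\nu$, $\theta=\zeta$ and $h^\lambda/p^\zeta < 1$ actually enter, is the passage from $\gamma^{-1}$ to a constant that is \emph{uniform} in $h$ and $p$. Here I would appeal to Corollary~\ref{cor2}, which under precisely these assumptions delivers $\gamma \ge 1/\sqrt{228}$; consequently one may take $C = \sqrt{228}$, independent of both $h$ and $p$, and the proof is complete.

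I expect the genuine content of the argument to reside entirely in this last step rather than in the routine estimates preceding it. In Proposition~\ref{prop1} the uniformity was immediate because the parameters vanished; here it is nontrivial, since $\gamma = \xi_2/\xi_1$ depends on $h$ and $p$ through (\ref{eq17}), and it is only the tailored choice $\beta = 4h^\nu/(10 p^\theta)$ of Corollary~\ref{cor2}, combined with the scaling restriction $h^\lambda/p^\zeta < 1$, that cancels this dependence. Checking that this choice keeps $\xi_1$ bounded above and $\xi_2$ bounded below by fixed constants is therefore the crux, and everything else is a verbatim repetition of Proposition~\ref{prop1}.
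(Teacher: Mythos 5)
Your proposal is correct and follows essentially the same route as the paper: the paper's proof of Proposition~\ref{prop2} simply says to repeat the argument of Proposition~\ref{prop1} and invoke Corollary~\ref{cor2} to bound $\gamma$ from below by $1/\sqrt{228}$, which is exactly your chain of reasoning. Your additional observation that the only genuinely new content lies in the uniformity of $\gamma$ coming from the choice $\beta = 4h^{\nu}/(10p^{\theta})$ and the restriction $h^{\lambda}/p^{\zeta}<1$ is an accurate reading of where the hypotheses are actually used.
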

\begin{proof}
Following the proof of Proposition \ref{prop1}, and using Corollary \ref{cor2}, we get
\begin{eqnarray}
\|\delta u\|_{H^1(P_h)} \leq \frac1 \gamma \|\delta f\|_{L^2(\Omega)}.
\end{eqnarray}
Therefore, the proof is completed.
\end{proof}

\section{Conclusion}
We propose a new variational formulation by a new discontinuous Galerkin technique for
a two-dimensional reaction-diffusion problem with Dirichlet boundary conditions.
Our preliminary study discovers that there is a strong and intimate connection between
the new variational formulation and discontinuous Galerkin methods for
the reaction-diffusion problem with discontinuous coefficient,
we intend to further explore this relation, in particular,
to make use of this relation to design better hybrid numerical
methods which hopefully will not only strongly stable in spatial variable and absolutely stable
in temporal variable but also be optimally convergent.




\begin{thebibliography}{00}

\bibitem{Gilbarg-Trudinger}{D.Gilbarg, N.S.Trudinger. Elliptic Partial Differential Equations of Second Order. Springer-Verlag, Berlin-Heidelberg-New York, 2001.}

\bibitem{[7]W.H.Reed}{W.H. Reed and T.R. Hill. Triangular mesh methods for the neutron transport equation. Technical Report LA-UR-73-479. Los Alamos Scientific Laboratory, 1973.}

\bibitem{[4]D.N.ArnoldandF.Brezzl}{D.N. Arnold, F. Brezzl, B. Cockburn and D. Marini. Disconinuous Galerkin methods for
elliptic problems. Springer lecture notes in Computational Science and Engineering, {\bf 11}:89-101, 2000.}

\bibitem{[12]D.N.ArnoldandB.Cockburn}{D.N. Arnold, F. Brezzi,
B. Cockburn, and D. Marini. Unified analysis of discontinuous
Galerkin methods for elliptic problems. SIAM J. Numer.
Anal. {\bf 39}:1749-1779, 2001.}



\bibitem{[8]I.Babuska}{I. Babuska. The finite element method with penalty.
Math. Comput. {\bf 27}:221-228, 1973.}

\bibitem{[13]I.BabuskaandC.Baumann}{I. Babuska, C. Baumann, and J. Oden.
A discontinuous hp-finite element method for diffusion problems: 1-D analysis.
Comput. Appl. Math. {\bf 37}:103-122, 1999.}

\bibitem{[9]I.BabuskaandM.Zlamal}{I. Babuska and M. Zlamal.
Nonconforming elements in the finit elements method with penalty.
SIAM J. Numer. Anal. {\bf 10}:863-875, 1973.}

\bibitem{[19]F.BassiandS.Rebay}{F. Bassi and S. Rebay. A high-order
accurate discontinuous finite element method for the numerical solution
of the compressible Navier-Stokes equations.
J. Comput. Phys. {\bf 131}:267-279, 1997.}

\bibitem{[20]B.CockburnandC.W.Shu}{B. Cockburn, C.W. Shu.
The local discontinuous Galerkin finite element method for
convection-diffusion systems, SIAM J. Numer. Anal. {\bf 35}:2440-2463, 1998.}

\bibitem{[3]V.Dolejsi}{V. Dolejsi, M. Feistauer and V. Sobotikova. Analysis of the discontinuous Galerkin method for nonlinear convection-diffusion problems. {\bf 194}:2709-2733, 2005.}

\bibitem{[10]J.Douglas}{J. Douglas, Jr. and T. Dupont. Interior penalty
procedures for elliptic and parabolic Galerkin methods. Lecture Notes
in Physics {\bf 58}, Springer-Verlag, Berlin, 1976.}


\bibitem{[5]B.Riviere}{B. Riviere. Discontinuous Galerkin Methods
for Solving Elliptic and Parabolic Equations: Theory and Implementation.
SIAM, Philadelphia, 2008.}

\bibitem{[28]B.RiviereandM.F.Wheeler}{B. Riviere, M.F. Wheeler and V. Girault.
A priori error estimates for finite element methods based on discontinuous
approximation spaces for elliptic problems. SIAM J. Numer. Anal.
{\bf 39}:902-931, 2001.}


\bibitem{[21]J.OdenandC.E.Baumann}{J. Oden and C.E. Baumann.
A discontinuous hp finite element method for the Navier-Stokes equations,
10th International Conference on Finite Element in Fluids, 1998.}

\bibitem{[14]F.BrezziandG.Manzini}{F. Brezzi, G. Manzini, D. Marini,
P. Pietra, and A. Russo. Discontinuous Galerkin approximations for
elliptic problems. Numer. Methods Partial Differ. Equ. {\bf 16}:365-378, 2000.}

\bibitem{[2]A.Romkes}{A. Romkes, S. Prudhomme and J.T. Oden. A priori error analyses of a stabilized discontinuous Galerkin method. TICAM Report 02-28, The University of Texas at Austin, {\bf 46}:1289-1311, 2003.}

\bibitem{[1]J.T.Oden}{J.T. Oden, I. Babuska and C.E. Baumann. A discontinuous hp finite element method for diffusion problem. J. Comput. Phys. {\bf 146}:491-519, 1998.}


\bibitem{[22]T.J.R.HughesandG.Engel}{T.J.R. Hughes, G.Engel, L. Mazzei and
M.G. Larson. A comparison of discontinuous and continuous Galerkin methods
based on error estimates, conservation, robustness and efficiency. Lecture
Notes in Computational Science an Engineering. {\bf 11}:135-146, 1999.}

\bibitem{[16]P.PercellandM.F.Wheeler}{P. Percell and M.F. Wheeler.
A local residual finite element procedure for elliptic equations.
SIAM J. Numer. Anal. {\bf 15}:705-714, 1978.}










\bibitem{[6]F.Brezzi}{F. Brezzi and M. Fortin. Mixed and hybrid
finite element methods. Springer Series in Computational Mechanics
{\bf 15}, 1991.}
































\end{thebibliography}
\end{document}